\documentclass[amscd,amssymb,verbatim,10pt]{amsart}

\title[A new discrete monotonicity formula]
{A new discrete monotonicity formula with application to a two-phase free boundary problem
in dimension two}

\usepackage{esint}
\usepackage{color}
\usepackage{hyperref}

\usepackage{caption}
\usepackage{subcaption}

\theoremstyle{plain}
\newtheorem{theorem}{Theorem}[section]
\newtheorem{lemma}[theorem]{Lemma}

\newtheorem{prop}[theorem]{Proposition}
\newtheorem{corollary}[theorem]{Corollary}
\newtheorem{remark}[theorem]{Remark}
\theoremstyle{definition}
\newtheorem{defn}[theorem]{Definition}

\renewcommand\epsilon\varepsilon 
\renewcommand\phi\varphi 
\newcommand\supp{\operatorname{supp}} 
\newcommand\dist{\operatorname{dist}} 
\newcommand\R{\mathbb{R}} 
\numberwithin{equation}{section}
\newcommand\e{{\epsilon}}
\newcommand\na{{\nabla}}
\newcommand\fb[1]{\partial\{{#1>0}\}}

\newcommand{\ca}{\mathrm{cap}}

\newcommand{\mpar}[1]{%
  \marginpar{\colorbox{lime}{\parbox{\marginparwidth}{%
  \setstretch{0.5}\textcolor{black}{\scriptsize{#1}}}}}}
\marginparwidth=62pt

\usepackage[english]{babel}
\usepackage{blindtext}
\usepackage[svgnames]{xcolor}
\usepackage[doublespacing]{setspace}
\usepackage{amsmath, amssymb}


\usepackage{comment}

\usepackage{graphicx, geometry}
\geometry{paperwidth=6.9in,
paperheight=10.5in,
margin=15mm}
\usepackage[a4,center]{crop}

\author[S. Dipierro]{Serena Dipierro}
\address[Serena Dipierro]{Maxwell Institute for Mathematical Sciences and
School of Mathematics, University of Edinburgh,
James Clerk Maxwell Building, Peter Guthrie Tait Road,
Edinburgh EH9 3FD,
United Kingdom.
}
\email{serena.dipierro@ed.ac.uk}

\author[A.L. Karakhanyan]{Aram L. Karakhanyan}
\address[Aram Karakhanyan]{Maxwell Institute for Mathematical Sciences and
School of Mathematics, University of Edinburgh,
James Clerk Maxwell Building, Peter Guthrie Tait Road,
Edinburgh EH9 3FD,
United Kingdom.
}
\email{aram.karakhanyan@ed.ac.uk}

\thanks{2000 Mathematics Subject Classification. Primary 35R35, 35J92. 
Keywords: Free boundary regularity, two phase, $p-$Laplace, partial regularity. 
\\
The authors were partially supported by EPSRC grant EP/K024566/1.
 }
\newcommand\rred[1]{{\color{red} #1}}

\begin{document}
\maketitle

\begin{abstract}
We continue the analysis of the two-phase free boundary problems 
initiated in  \cite{DK}, where we studied the linear growth of minimizers in
a Bernoulli type free boundary problem at the non-flat points
and the related regularity of free boundary. There, we also defined the functional
$$\phi_p(r,u,x_0)=\frac1{r^4}\int_{B_r(x_0)}\frac{|\na u^+(x)|^p}{|x-x_0|^{N-2}}dx\int_{B_r(x_0)}\frac{|\na u^-(x)|^p}{|x-x_0|^{N-2}}dx$$
where $x_0$ is a free boundary point, i.e. $x_0\in\partial\{u>0\}$
and~$u$ is a minimizer of the functional
$$J(u):=\int_{\Omega}|\nabla u|^p +\lambda_+^p\,\chi_{\{u>0\}} +\lambda_-^p\,\chi_{\{u\le 0\}},
$$
for some bounded smooth domain $\Omega\subset\R^N$ and positive constants 
$\lambda_\pm$ with $\Lambda:=\lambda_+^p-\lambda^p_->0$.

Here we 
show the discrete monotonicity of $\phi_p(r,u,x_0)$ in two
spatial dimensions at non-flat points, when $p$ is sufficiently close to 2, 
and then establish the linear growth. A new feature of our approach
is the anisotropic scaling argument discussed in Section~\ref{s1}.

\end{abstract}

\setcounter{tocdepth}{1}
\tableofcontents
%
%
\section{Introduction}
Let $\Omega\subset\R^2$ be a bounded planar domain such that any function in the Sobolev  space $W^{1, p}(\Omega)$
has well-defined trace 
and $p\in(2,2+\epsilon)$, for a small, fixed $\epsilon>0$.
Assume that $u$ is a local minimizer of
\begin{equation}\label{Ju}
J(u):=\int_{\Omega}|\nabla u|^p +\lambda_+^p\,\chi_{\{u>0\}} +\lambda_-^p\,\chi_{\{u\le 0\}},
\quad u-g\in W_0^{1, p}(\Omega),
\end{equation}
where $\lambda_+$ and $\lambda_-$ are positive constants such
that~$\lambda_+^p-\lambda_-^p>0$ and $g\in W^{1, p}(\Omega)$ is a prescribed boundary datum.
In what follows $\chi_U$ denotes the characteristic function of the set~$U\subset\R^2$.

The variational problem for \eqref{Ju} is called the Bernoulli-type free boundary problem
and it models a number of interesting phenomena, notably planar cavitational flow
of one or two perfect fluids (see \cite{BZ} Chapter 9.11), the equilibrium configuration for heat or
electrostatic energy optimization in higher dimensions, (e.g. heat flow with power Fourier  law) 
and the dynamics of non-Newtonian fluids 
when the velocity obeys the power law ${\bf v}=\nabla \psi |\nabla \psi|^{\frac1s-1}$
where $\psi$ is a stream function. Notice that $s=1$ corresponds to Newtonian fluids and 
$s$ is a physical parameter, see \cite{Astar}. 

\medskip 

For $p=2$ both the one phase and the two-phase problems have been extensively studied  
for variational \cite{ACF} as well as viscosity solutions \cite{Luis}. 
There is a significant difference between the one phase and two-phase
problems stipulated by a sign change of $u$ across the free boundary.
The main and only known method for proving the optimal regularity  
for the two-phase  problem is based on the monotonicity formula of 
Alt, Caffarelli and Friedman \cite{ACF} given by 
\begin{equation}
\phi(r,x_0)=\frac1{r^4}\int_{B_r(x_0)}\frac{|\na u^+(x)|^2}{|x-x_0|^{N-2}}dx
\int_{B_r(x_0)}\frac{|\na u^-(x)|^2}{|x-x_0|^{N-2}}dx,
\end{equation}
where $r>0$ and $x_0\in \partial\{u>0\}$. 
It is well-known that if  $u$ is a minimizer of \eqref{Ju} and $u^+:=\max\{0, u\}$ and 
$u^-:=-\min\{0, u\}$, then  
$\phi(r, x_0)$ is a non-decreasing function of $r$.
The monotonicity of $\phi$, combined with the coherent growth of 
$\frac1t\fint_{B_t(x_0)}u^+$ and $\frac1t\fint_{B_t(x_0)}u^-$, for $x_0\in \fb{u}$,  
gives uniform local upper linear bound for $u$, see \cite{ACF}.

\medskip

The key ingredient in the proof of the monotonicity formula in \cite{ACF} 
is the following geometric property of the 
eigenvalues of the Laplace-Beltrami operator on the unit sphere $\partial B_1$: 
let $\gamma_1, \gamma_2$  be the characteristic numbers 
corresponding to two complementary domains $\Gamma_1, \Gamma_2$ on $\partial B_1$, that is
$$\gamma_i(\gamma_i+N-2):=\inf_{v\in W^{1,2}_0(\Gamma_i)}\frac{\int_{\Gamma_i} 
|\nabla_\theta v|^2}{\int_{\Gamma_i} v^2},\quad i=1,2,$$
then  
\begin{equation}\label{char-nmbrs}
\gamma_1+\gamma_2\ge 2
\end{equation}
and the equality holds if and only if $\Gamma_1,\Gamma_2$ are two complementary hemispheres,  see \cite{Luis}, Chapter~12.

Note that in two spatial dimensions $\gamma_i$ is the square root of the eigenvalue corresponding to the
portion $\Gamma_i$ of the unit circle.  

In Section \ref{sec-eigen-1} we present some results related to the characteristc numbers 
and the eigenvalues of the $p$-Laplace-Beltrami operator for $p\neq2$.

\medskip

There are fewer results established for $p\neq 2$ for the two-phase problem. 
A partial result on the optimal regularity of $u$ is 
given in \cite{K1} under a smallness assumption on the Lebesgue density of 
the set $\{u\le 0\}$, and recently has been extended to a more general class of 
functionals in \cite{Moreira}. 

Our paper  contributes in the direction of optimal regularity and monotonicity formula techniques.
More precisely, we 
show that in two spatial dimensions $N=2$ (and for $p$ sufficiently close to 2)
the functional
\begin{equation}\label{varphi}
\varphi_p(r,u,x_0):=\frac{1}{r^4}\int_{B_r(x_0)}|\nabla u^+|^p\,\int_{B_r(x_0)}|\nabla u^-|^p
\end{equation}
is discrete monotone (see Theorem \ref{TH2} for the precise statement). 
Here~$r>0$ is small and~$x_0\in\Gamma:=\partial\{u>0\}$, 
being $\Gamma$ the free boundary.
Consequently, we prove that $\phi_p$ is bounded if the free boundary is not flat at $x_0$.

In fact, we establish  a dichotomy for~$\varphi_p$: either the free boundary is smooth at $x_0$ or
$\phi_p$ is discrete monotone. 

For this, we introduce
a suitable notion of flatness for the free boundary points characterizing the {\textit{flat}} points. 
It follows 
from the results of \cite{LN1, LN2} that at such points the free boundary must be  regular
provided that $u$ is also a viscosity solution in the sense of Definition \ref{def:visc}, see also the discussion 
in Section \ref{sec:visc}. The fact that minimizers of $J$ are viscosity solutions 
has been established in \cite{DK}. 

On the other hand, at {\textit{non-flat}} points, we prove that $\varphi_p$ is discrete monotone 
and we deduce from this the linear growth of $u$ near these points. 

\medskip

In the subsequent section, we present our main results.
A detailed plan about the organization of the
paper will then be presented at the end of Section~\ref{sec:main}.

%
%
\section{Main Results}\label{sec:main}
In this section we formulate our main results. 
We will denote by $\Gamma:=\partial\{u>0\}$ the free boundary. 
Fix $x_0\in\Gamma$ and $h>0$, and 
consider the slab
\begin{equation}
S(h; x_0, \nu):=\{x\in \R^n : -h<(x-x_0)\cdot\nu<h\}
\end{equation}
where $\nu$ is a unit vector. Let $h_{\min}(x_0, r, \nu)$ be the
minimal height of the slab in the unit direction $\nu$ containing the free boundary in $B_r(x_0)$, i.e.
\begin{equation}\label{slab-flat}
h_{\min}(x_0, r, \nu):=\inf\{h : \partial\{u>0\}\cap B_r(x_0)\subset S(h; x_0, \nu)\cap B_r(x_0)\}.
\end{equation}
If we set
\begin{equation}\label{min-h}
h(x_0, r):=\inf_{\nu\in \mathbb S^n}h_{\min}(x_0, r, \nu)
\end{equation}
then $h(x_0, r)$ is non-decreasing in $r$.

Theorem to follow deals with the points where the free boundary is not
sufficiently flat.

\begin{theorem}\label{TH2}
Let $u$ be a local minimizer of the functional~$J$ defined in~\eqref{Ju}.
Then, there exist tame constants $p_0>2$, $r_0>0$ and $h_0>0$ such that if
\begin{eqnarray}
\label{p cond}&2<p<p_0\quad and \quad  r<r_0\\
\mbox{then the inequality}
\label{delta cond}
&h(x_0, r)\ge h_0r,\quad   {\mbox{ for }}x_0\in\Gamma\cap B_{3r},
\end{eqnarray}
implies that 
\begin{equation}\label{tripling}
\varphi_p(r,u,x_0)\le \varphi_p(3r,u,x_0), 
\end{equation}
where $h(x, r)$ is defined by \eqref{min-h} and $\varphi_p$ by \eqref{varphi}.
\end{theorem}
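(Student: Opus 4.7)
The plan is to adapt the Alt--Caffarelli--Friedman (ACF) monotonicity argument to the $p$-energy, with two modifications dictated by $p\neq 2$: a triplication inequality replaces the pointwise differential monotonicity, and the anisotropic rescaling announced in the abstract is used to reconcile the different scaling behaviours of the radial and tangential parts of $|\na u^\pm|^p$. Centring coordinates at $x_0$, I first rewrite
\[
\int_{B_s}|\na u^\pm|^p\,dx=\int_0^s\int_{\partial B_\rho}|\na u^\pm|^p\,d\mathcal{H}^1\,d\rho,
\]
split $|\na u^\pm|^p$ on each circle $\partial B_\rho$ into its radial and tangential parts, and notice that the trace of $u^\pm$ vanishes at the endpoints of every arc $\Gamma^\pm_\rho:=\partial B_\rho\cap\{\pm u>0\}$. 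A Rayleigh quotient bound on these arcs feeds into the ACF-type eigenvalue inequality for the one-dimensional $p$-Laplacian on complementary arcs supplied by Section~\ref{sec-eigen-1}; for $p=2$ the inequality reduces to \eqref{char-nmbrs}.

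The combination of these ingredients yields a pointwise logarithmic lower bound of the form
\[
\bigl(\log\varphi_p(\rho,u,x_0)\bigr)'\ge\frac{2}{\rho}\bigl(\sigma(\rho)-\omega(p-2)\bigr),
\]
where $\sigma(\rho)\ge 0$ is a quantitative surplus measuring how far the pair $\{\Gamma^+_\rho,\Gamma^-_\rho\}$ is from a pair of opposite semicircles, and $\omega(p-2)\to 0$ as $p\downarrow 2$. Since for $p\neq 2$ the radial and tangential components of $|\na u^\pm|^p$ do not scale uniformly under isotropic dilations, the anisotropic rescaling of Section~\ref{s1} is invoked to rebalance them, so that the previous estimate propagates cleanly to every $\rho\in(r,3r)$. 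The hypothesis $h(x_0,r)\ge h_0 r$, combined with the monotonicity of $s\mapsto h(x_0,s)$, then forces $\sigma(\rho)\ge\sigma_0(h_0)>0$ on a subset of $(r,3r)$ of length bounded below by $c(h_0)r$, because away from the half-circle configuration the inequality \eqref{char-nmbrs} is strict in a controlled, $p$-stable manner.

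Integrating the logarithmic differential inequality from $r$ to $3r$ and choosing $p_0>2$ close enough to $2$ so that $\omega(p-2)\log 3$ is absorbed by the accumulated surplus of $\sigma$ produces \eqref{tripling}. The hard part is the quantitative stability of the $p=2$ ACF eigenvalue inequality on arcs: the sharp bound \eqref{char-nmbrs} is attained \emph{exactly} at opposite semicircles, and I must upgrade this rigidity to a $p$-uniform modulus of the form ``arcs that are $h_0$-far from a pair of opposite semicircles satisfy $\gamma_1^{(p)}+\gamma_2^{(p)}\ge 2+\sigma_0(h_0)$ for all $p$ in a right-neighbourhood of $2$.'' Making such a modulus compatible with both the anisotropic rescaling and the variational structure of $J$ is where all of the $p$-dependence, and most of the effort, is expected to concentrate.
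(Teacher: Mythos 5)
Your outline rests on a step that is not available for $p\neq 2$ and that the paper deliberately avoids: a pointwise logarithmic differential inequality $\bigl(\log\varphi_p(\rho)\bigr)'\ge\frac{2}{\rho}\bigl(\sigma(\rho)-\omega(p-2)\bigr)$ for a general minimizer. To run the ACF computation you need two things on each circle $\partial B_\rho$: the identity relating $\int_{B_\rho}|\na u^\pm|^p$ to a boundary flux, and a \emph{variational} (Rayleigh-quotient) characterization of the eigenvalue of the relevant arc problem so that the trace of $u^\pm$ is an admissible competitor. Section~\ref{sec-eigen-1} supplies neither for general functions: the eigenvalue problem \eqref{starewrwe} is nonlinear, its weight $(\lambda^2\phi^2+\phi_\theta^2)^{(p-2)/2}$ depends on the eigenfunction itself, and the computation there (including the inequality \eqref{sum-geq-2} and the chain \eqref{sharq-1}--\eqref{sharq-2}) is carried out only for \emph{exactly homogeneous} $p$-harmonic functions $r^{\lambda_i}g_i(\theta)$ in complementary cones. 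There is no infimum characterization that lets you insert the traces of $u^\pm$ on $\partial B_\rho$ and harvest a surplus $\sigma(\rho)$, and no known quantitative stability of the form you describe for $p$ in a right-neighbourhood of $2$; indeed, if your differential inequality held you would obtain full (not merely discrete) monotonicity of $\varphi_p$, which is stronger than the theorem and is precisely what remains open. You have also misread the ``anisotropic scaling'': in Section~\ref{s1} it does not rebalance radial versus tangential components of the gradient, but refers to normalizing $u^+$ and $u^-$ by \emph{different} factors $S^\pm_j$ as in \eqref{Sj}--\eqref{uj}, so that the blow-up is not a scalar multiple of $u$ and both phases stay non-degenerate without invoking linear growth.

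The paper's actual proof is by contradiction and compactness. Assuming \eqref{tripling} fails along sequences $p_j\downarrow 2$, $r_j\to0$ at $h_0$-non-flat points, one rescales anisotropically, proves non-degeneracy of both phases via a Caccioppoli argument (Lemma~\ref{lem-nondeg}), obtains uniform higher integrability $\|\na u_j^\pm\|_{L^q}\le C$ with $q>2$ via capacity estimates, Poincar\'e and Gehring's lemma (Lemma~\ref{lem-Gehring}), and passes to a strong $L^2$ limit $u_0$ for which the \emph{classical} $p=2$ ACF formula (Theorem~\ref{Luis-mon}) applies. The contradiction hypothesis then forces $\varphi_2$ to be constant on $(1,3)$, hence $u_0$ is a two-plane solution in $B_3\setminus B_1$; harmonicity of $u_0^\pm$ in their positivity sets plus unique continuation fills the gap into $B_1$, contradicting the non-flatness $h(0,1)\ge h_0$ inherited in the limit. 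In short, the eigenvalue rigidity is only ever used at $p=2$ on the blow-up limit, never quantitatively for $p\neq2$; your proposal requires exactly the $p$-uniform stability estimate that neither the paper nor the literature provides, so as written it does not close.
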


Theorem \ref{TH2} says that if at the level $r$ the free boundary is not 
sufficiently flat then the $\phi_p$ energy at the level $r$ is controlled by 
the same energy at the tripled level $3r$. 

It is worthwhile to point out that in the proof of Theorem \ref{TH2}  
we use a compactness argument based on an anisotropic scaling in order 
to assure the non-degeneracy of an appropriately scaled function, 
thus avoiding the use of the knowledge of the linear growth from \cite{DK}. 

As a consequence, we have the following: 

\begin{theorem}\label{TH2-prime}
Let $u$ be a local minimizer of the functional~$J$ defined in~\eqref{Ju}, 
and let~$x_0\in\Gamma$ be a non-flat point of the free boundary, i.e. 
for any $r<r_0$ we have $h(x_0, r)\ge h_0r$, where $r_0$ is given by Theorem \ref{TH2}. 

Then,~$u$ has linear growth near~$x_0$.  
\end{theorem}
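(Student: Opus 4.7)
The plan is to combine the discrete monotonicity of $\varphi_p$ from Theorem~\ref{TH2} with non-degeneracy at free boundary points and Morrey's embedding in two dimensions.

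First, I would iterate Theorem~\ref{TH2}. Since $x_0$ is non-flat, $h(x_0,r)\ge h_0 r$ for all $r<r_0$, so applying the tripling inequality \eqref{tripling} inductively yields
$$\varphi_p(r,u,x_0)\le \varphi_p(3^k r,u,x_0)$$
for every $k$ with $3^k r\le r_0$. Choosing $k$ maximal, one deduces
$$\varphi_p(r,u,x_0)\le \varphi_p(r_0,u,x_0)\le C_0\qquad \forall\,r\in(0,r_0),$$
where $C_0$ depends only on $\|\nabla u\|_{L^p(B_{r_0}(x_0))}$, which is finite since $u$ is a local minimizer.

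Next, by the non-degeneracy for two-phase minimizers established in \cite{DK}, the non-flat point $x_0\in\Gamma$ satisfies $\sup_{B_r(x_0)} u^-\ge c\,r$ for all small $r$. Since $p>2$, Morrey's embedding $W^{1,p}(B_{2r})\hookrightarrow C^{0,1-2/p}$ and the fact that $u^-(x_0)=0$ give
$$cr\le \sup_{B_r(x_0)} u^- \le C\,r^{\,1-2/p}\,\|\nabla u^-\|_{L^p(B_{2r}(x_0))},$$
which rearranges to the lower bound
$$\int_{B_{2r}(x_0)}|\nabla u^-|^p\ge c_1 r^{2}.$$
Plugging this into the uniform bound $\varphi_p(2r,u,x_0)\le C_0$ produces the upper bound $\int_{B_{2r}(x_0)}|\nabla u^+|^p\le C r^2$, and a second application of Morrey (using $u^+(x_0)=0$) yields $\sup_{B_r(x_0)} u^+\le C'\, r$. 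The symmetric argument with $u^+$ and $u^-$ interchanged delivers the analogous bound for $u^-$, establishing linear growth.

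The main obstacle I anticipate is twofold. First, the iteration in the first step requires the non-flatness hypothesis of Theorem~\ref{TH2}, which is phrased for every free boundary point in $B_{3r}$, to be propagated from the single point $x_0$; this may need a covering or continuity argument exploiting the monotonicity of $h(x_0,r)$ in $r$ and the fact that nearby free boundary points inherit a slightly weaker non-flatness. Second, the non-degeneracy of $u^\pm$ at non-flat points in the $p\neq 2$ regime is a genuine input and the place where the theory of \cite{DK} is invoked; once these two ingredients are granted, the remaining Morrey step is a clean two-dimensional, $p>2$ computation.
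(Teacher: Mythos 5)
Your first step --- iterating \eqref{tripling} along the triadic sequence $3^{-k_0-k}$ to obtain $\varphi_p(r,u,x_0)\le C\,\varphi_p(3^{-k_0},u,x_0)$ for all $r<r_0$ --- is exactly what the paper does, and the worry you raise about the quantifier in \eqref{delta cond} is only presentational: the hypothesis of Theorem \ref{TH2-prime} supplies the non-flatness condition at every scale $r<r_0$, which is all the iteration needs.

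The genuine gap is in your second step. To turn the bound on the product $\varphi_p$ into an upper bound for $u^+$ you invoke the non-degeneracy $\sup_{B_r(x_0)}u^-\ge c\,r$ of the \emph{negative} phase. This is not available: it is proved nowhere in this paper, and for two-phase Bernoulli minimizers with $\Lambda=\lambda_+^p-\lambda_-^p>0$ only the positive phase enjoys non-degeneracy (the functional penalizes $\{u>0\}$, so comparison arguments give $\sup_{B_r}u^+\ge cr$, but $u^-$ may a priori vanish to higher order; a linear lower bound on $u^-$ is essentially of the same depth as the statement you are proving). Citing \cite{DK} here also runs against the paper's stated aim of avoiding any growth input from \cite{DK}. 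The paper's substitute for this input is the coherence estimate of Corollary \ref{coro3.4} (P.4 of Proposition \ref{prop:tec}), $\bigl|\fint_{B_r(x_0)}u\bigr|\le Cr$: writing $a_r:=\frac1r\fint_{B_r(x_0)}u^+$ and $b_r:=\frac1r\fint_{B_r(x_0)}u^-$, squaring gives $a_r^2+b_r^2\le C^2+2a_rb_r$, and the Poincar\'e inequality \eqref{Poinc} combined with the capacity lower bound \eqref{lower-cap-H} (the zero set contains the free boundary through $x_0$) controls $a_rb_r$ by $C(\varphi_p(r,u,x_0))^{1/p}$, which is bounded by your first step. This bounds both averages simultaneously without any lower bound on either phase; the passage from bounded averages to $\sup_{B_r(x_0)}|u|\le Cr$ is then made via the weak maximum principle for the $p$-subharmonic functions $u^\pm$ (your Morrey computation would serve the same purpose once the gradient bounds are in hand). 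Note that your route does correctly deliver one half of the conclusion --- the linear bound for $u^-$ --- since there the required input is the legitimate non-degeneracy of $u^+$; it is the bound for $u^+$ that cannot be closed as written.
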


Observe that we always have that $u^+$ and $u^-$ have comparable rates of growth 
from the free boundary, thanks to Corollary \ref{cod-ball}, 
i.e. 
$$ \frac1r\fint_{B_r(x_0)}u^+\sim \frac1r\fint_{B_r(x_0)}u^-.$$ 
In order to conclude that each of these terms is bounded we apply Theorem \ref{TH2} 
to infer that 
the product $\frac1{r^2}\fint_{B_r(x_0)}u^+\fint_{B_r(x_0)}u^-$ is also bounded. 
This is where $\phi_p$ enters into the game and provides the 
necessary bound,  see Section \ref{sec-th2-0-pr}.


\subsection*{Outline}
In Section~\ref{sec:tec} we collect some basic
material that we will use throughout the paper. We also show a coherence result
(see Proposition~\ref{prop:tec}, P.4) by using a different strategy
with respect to the case $p=2$ (see \cite{ACF}), that we think has an independent interest.

Sections \ref{s1} and \ref{sec-th2-0-pr} are devoted to the proofs of Theorems \ref{TH2}
and \ref{TH2-prime}. 

In Section \ref{sec:visc} we discuss the fact that any minimizer of the functional in \eqref{Ju}
is also a viscosity solution, according to Definition~\ref{def:visc}.
This, together with the notion of slab flatness, will allow us to apply the regularity theory
developed in \cite{LN1, LN2} for viscosity solutions.

Finally, in Section \ref{sec-eigen-1} we recall some results concerning
the relation between the characteristic numbers corresponding to two 
complementary cones for $p\neq2$.

%
%
\section*{Notations}
\begin{tabbing}
$C, C_0, C_N, \cdots$ \hspace{1.55cm}       \=\hbox{generic constants,}\\
$\overline U$       \>\hbox{closure of a set} $U$,\\
$\partial U$        \>\hbox{boundary of a set }  $U$,\\
$B_r(x),B_r$\> ball centered at~$x$ with radius~$r>0$, $B_r:=B_r(0)$, \\
$\Gamma$      \> \hbox{the  free boundary} $\partial\{u>0\}$,\\
$\Gamma_{\pm}$      \> $B_1\cap\partial\{u^\pm>0\}$,\\
$\displaystyle\fint$\> mean value integral, \\
$\omega_N$\> volume of unit ball,\\
$\lambda(u)$\>$\lambda_+^p\,\chi_{\{u>0\}} +\lambda_-^p\,\chi_{\{u\le 0\}}$,\\
$\Lambda=\lambda_+^p-\lambda^p_-$\> Bernoulli constant.
\end{tabbing}

%
%
\section{Technicalities}\label{sec:tec}

In this section we gather some basic facts that we shall use in the forthcoming sections.
One of the important results to be proved is the coherence
estimate~\eqref{coherence u}.
In the case~$p=2$ this estimate was showed in~\cite{ACF} (see Theorem~4.1 there),
and the proof uses the Poisson representation formula, that we do not have
for $p\not =2$. However, a combination of the methods
from \cite{ACF}, \cite{DiB-M} and \cite{Giaq} will give the result.

\subsection{Some basic properties of the local minimizers of $J$}

In the proposition to follow all claims are valid in any dimension.  
\begin{prop}\label{prop:tec}
Let $u\in W^{1,p}(\Omega)$ be a local minimizer of $J$. Then
\begin{itemize}
\item[P.1] $\Delta_p u^\pm\geq 0$ in the sense of distributions and $\Delta_p u=0$ in $\{u>0\}\cup \{u<0\}$,
\item[P.2] there is $c_0>0$ such that if
$$\limsup_{r\to 0}\frac{|B_r(x_0)\cap \{u<0\} |}{|B_r(x_0)|}\le c_0, \quad x_0\in \Gamma,$$
then $u$ has linear growth near $x_0$ depending only on $\frac1{c_0}$ times some tame constant,
\item[P.3] $\nabla u\in L^q$ locally, for any finite $q>1$, 
and $u$ is locally log-Lipschitz continuous,
\item[P.4] 
for any $D\Subset \Omega$ there exist~$\bar{r}>0$ and $C>0$ depending on 
$p, \sup |u|, \dist(D, \partial \Omega)$ such that for any~$x_0\in\Gamma\cap B_1$
\begin{equation}\label{coherence u}
\left|\fint_{\partial B_r(x_0)}u\right|\le Cr, \quad {\mbox{ for any }}r\le \bar{r}.
\end{equation}
\end{itemize}
\end{prop}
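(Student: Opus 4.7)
I treat the four items in turn, reserving most of the effort for \textrm{(P.4)}, which is the heart of the statement. For \textrm{(P.1)} I carry out the first variation tailored to the Bernoulli functional. In $\{u>0\}$, which is open by continuity of $u$, any $\varphi\in C_c^\infty(\{u>0\})$ gives a competitor $u+\varepsilon\varphi$ that leaves $\chi_{\{u>0\}}$ unchanged, and minimality immediately yields $\Delta_p u=0$; the analogous argument works in $\{u<0\}$. For $\Delta_p u^+\ge 0$ I test with $\varphi\in C_c^\infty(\Omega)$, $\varphi\ge 0$, via the one-sided competitor
$$u_\varepsilon=(u-\varepsilon\varphi)^+-u^-,$$
which has $\{u_\varepsilon>0\}\subset\{u>0\}$, so the change in the characteristic-function part is a nonnegative multiple of $\Lambda=\lambda_+^p-\lambda_-^p>0$ and minimality forces $\int|\nabla u^+|^{p-2}\nabla u^+\cdot\nabla\varphi\le 0$. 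A sign-swap handles $u^-$.

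\textrm{(P.2)} follows from the one-phase reduction in \cite{K1}: when the density of $\{u\le 0\}$ at $x_0$ is smaller than $c_0$, the negative phase is negligible in a quantitative sense and $u^+$ behaves as a local minimizer of the Alt--Caffarelli one-phase functional, for which the $p$-Laplace harmonic-replacement argument of \cite{K1} produces linear growth with constants that deteriorate like $1/c_0$. For \textrm{(P.3)} I start from the distributional Euler--Lagrange equation
$$-\div\bigl(|\nabla u|^{p-2}\nabla u\bigr)=\Lambda\,\mathcal H^{N-1}\lfloor\Gamma,$$
whose right-hand side is an Ahlfors-regular measure with $\mu(B_r)\le Cr^{N-1}$ (from nondegeneracy of the positive phase plus a standard covering). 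This is the borderline Morrey scale for the $p$-Laplacian, and the Kilpel\"ainen--Mal\'y / Mingione pointwise potential estimates, together with a Giaquinta-style reverse-H\"older iteration, push $\nabla u$ into $L^q_{\mathrm{loc}}$ for every finite $q$; Morrey's embedding then delivers the log-Lipschitz modulus.

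For \textrm{(P.4)} I fix $x_0\in\Gamma$ and let $h$ be the $p$-harmonic replacement of $u$ on $B_r(x_0)$, i.e. $\Delta_p h=0$ with $h=u$ on $\partial B_r$. Minimality together with $\Lambda>0$ gives
$$\int_{B_r}\bigl(|\nabla u|^p-|\nabla h|^p\bigr)\le\Lambda\,|B_r|\le Cr^N,$$
and the uniform convexity of $|\cdot|^p$ for $p\ge 2$, combined with the $p$-harmonicity of $h$, upgrades this to $\int_{B_r}|\nabla(u-h)|^p\le Cr^N$. Since $u-h\in W_0^{1,p}(B_r)$, the spherical means coincide, $\fint_{\partial B_r}u=\fint_{\partial B_r}h$, and my task reduces to estimating the spherical mean of $h$. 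The DiBenedetto--Manfredi gradient bound combined with the higher integrability from \textrm{(P.3)} gives $\|\nabla h\|_{L^\infty(B_{r/2})}\le C$, so $|h(y)-h(x_0)|\le C|y-x_0|$ on $B_{r/2}$; a trace/Poincar\'e estimate transferring this bound from $\partial B_{r/2}$ back to $\partial B_r$ via the $L^p$-smallness of $\nabla(u-h)$ then yields $|\fint_{\partial B_r}u-h(x_0)|\le Cr$.

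The main obstacle is to control $|h(x_0)|$ by $Cr$. Feeding the log-Lipschitz modulus from \textrm{(P.3)} into the maximum principle only gives $|h(x_0)|\le Cr|\log r|$, losing a logarithmic factor. To remove this log I set up a Campanato-type iteration over dyadic scales $\rho_k=2^{-k}r$: at each scale the replacement estimate above controls the oscillation $|\fint_{\partial B_{\rho_k}}u-\fint_{\partial B_{\rho_{k-1}}}u|$ by $C\rho_k$; summing the geometric series and absorbing the top-scale term into an $L^\infty$ bound on $u$ produces the desired $|\fint_{\partial B_r}u|\le Cr$ without any logarithmic loss. This iterative scheme plays the role that the Poisson-kernel representation plays for \cite{ACF} when $p=2$, and I expect that the quantitative transfer of the $L^p$-control on $\nabla(u-h)$ to $L^1$-control on each dyadic sphere is the most delicate step, since it is what drives the geometric summability.
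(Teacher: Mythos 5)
Your treatment of P.1 and P.2 is fine (the paper handles P.1 by the same first-variation comparison and delegates P.2 to \cite{K1}), and your replacement estimate $\int_{B_r}|\nabla(u-h)|^p\le Cr^N$ is exactly the input the paper takes from \cite{DP2}. The problem is everything downstream of it. The paper's proof of both P.3 and P.4 rests on one intermediate fact that your proposal never establishes: $\nabla u\in BMO$ locally, proved by combining the replacement estimate with the Campanato decay $\int_{B_r}|\nabla h-(\nabla h)_{x_0,R}|^2\lesssim (r/R)^{N+\alpha}\int_{B_R}|\nabla h-(\nabla h)_{x_0,R}|^2$ of DiBenedetto--Manfredi and Giaquinta's iteration lemma, yielding $\int_{B_r}|\nabla u-(\nabla u)_{x_0,r}|^2\le Cr^N$. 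Your alternative route to P.3 via $-\Delta_p u=\Lambda\,\mathcal H^{N-1}\lfloor\Gamma$ with $\mu(B_r)\le Cr^{N-1}$ is circular: that density bound requires $\fint_{B_{2r}}|\nabla u|^p\le C$ uniformly in $r$ (test $\Delta_p u^\pm\ge 0$ against a cutoff and you get $\mu(B_r)\lesssim r^{N-1}\bigl(\fint_{B_{2r}}|\nabla u|^p\bigr)^{(p-1)/p}$), which is precisely the scale-invariant gradient control one is trying to prove; moreover the identification of the measure with $\Lambda\mathcal H^{N-1}\lfloor\Gamma$ presupposes regularity of $\Gamma$ that is not available here. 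Nondegeneracy of the positive phase gives lower, not upper, bounds on this measure.

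The same missing BMO bound undermines P.4. Your claim $\|\nabla h\|_{L^\infty(B_{r/2})}\le C$ cannot hold at this stage: the interior gradient bound gives $\|\nabla h\|_{L^\infty(B_{r/2})}\le C\bigl(\fint_{B_r}|\nabla u|^p\bigr)^{1/p}$, and from $\nabla u\in L^q_{\rm loc}$ for all finite $q$ (or even from BMO) this average may grow like $\log(1/r)$ --- consistent with $u$ being only log-Lipschitz; a uniform bound here would essentially be the Lipschitz estimate itself. Your fallback dyadic scheme is structurally the right idea --- indeed it is a discretized version of what the paper does --- but the per-scale bound $\bigl|\fint_{\partial B_{\rho_k}}u-\fint_{\partial B_{\rho_{k-1}}}u\bigr|\le C\rho_k$ with a constant uniform over $k$ is exactly equivalent to a scale-uniform bound on $\fint_{B_{\rho_k}}|\nabla u-(\nabla u)_{x_0,\rho_k}|$, i.e.\ to the BMO estimate: writing $\fint_{\partial B_t}\partial_\nu u=\fint_{\partial B_t}(\nabla u-c)\cdot\nu$ and integrating in $t$ converts sphere averages to ball averages of the mean oscillation, and no weaker information ($L^q$ for all $q$, the replacement estimate alone, or the log-Lipschitz modulus) makes that constant scale-uniform. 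The paper short-circuits the iteration entirely with the exact identity
\begin{equation*}
\frac1{r^{N-1}}\int_{\partial B_r(x_0)}u=\frac1{r^{N-1}}\int_{B_r(x_0)}\Bigl[\na u-\fint_{B_r(x_0)}\na u\Bigr]\cdot\frac{x-x_0}{|x-x_0|}\,dx +(N-1)\int_0^r\frac1{t^N}\int_{B_t(x_0)}\Bigl[\na u-\fint_{B_t(x_0)}\na u\Bigr]\cdot\frac{x-x_0}{|x-x_0|}\,dx\, dt,
\end{equation*}
which gives $\bigl|\fint_{\partial B_r}u\bigr|\le 3r\|\na u\|_{BMO}$ in one stroke. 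To repair your argument, insert the Campanato/Giaquinta step establishing $\nabla u\in BMO$ before attempting either P.3 or P.4.
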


\begin{remark}
Note that P.4 in Proposition~\ref{prop:tec} says that
either both ~$\frac{1}{r}\fint_{\partial B_r(x_0)}u^+$
and~$\frac{1}{r}\fint_{\partial B_r(x_0)}u^-$ go to~$+\infty$ as $r\to0$
or they both remain bounded.
\end{remark}

\begin{remark}
We stress on the fact that the results in Proposition~\ref{prop:tec}
hold in any dimension
\end{remark}

\begin{proof}
P.1 follows from a standard comparison of $u$ and $u+\epsilon \phi$ for a suitable
smooth compactly supported function $\phi$, and the proof of P.2 can be found in \cite{K1}.

Now we focus on the proofs of P.3 and P.4. 
For this, we observe that it is enough to show that 
\begin{equation}\label{BMO123}
{\mbox{locally }}\, \na u\in BMO. \end{equation}
Indeed, if this is true then $\nabla u\in L^q$ locally, for any $1<q<+\infty$. 
Moreover, the log-Lipschitz estimate follows from 
\cite{Cianchi}, Theorem 3. This proves P.3. 

Also, $u$ is continuous and
$$ \lim_{r\to 0}\fint_{\partial B_r(x_0)}u=0 \quad \mbox{ for any} x_0\in \Gamma.$$
Now, we notice that, for~$\epsilon>0$,
\begin{eqnarray*}\left|
\frac{1}{\epsilon^{N-1}}\int_{B_\epsilon(x)}\nabla u(x)\cdot\frac{x-x_0}{|x-x_0|}\,dx \right|&=&
\left|\frac{1}{\epsilon^{N-1}}\int_{B_\epsilon(x)}\left(\nabla u(x)-
\fint_{B_\epsilon(x)}\nabla u\right)\cdot\frac{x-x_0}{|x-x_0|}\,dx \right|\\
&\le &\epsilon\left(\frac{1}{\epsilon^{N}}\int_{B_\epsilon(x)}\left|
\nabla u(x)- \fint_{B_\epsilon(x)}\nabla u\right|\,dx\right)\longrightarrow 0,
\end{eqnarray*}
as~$\epsilon\to0$, thanks to the BMO estimate in \eqref{BMO123}. Thus
\begin{eqnarray*}
\frac1{r^{N-1}}\int_{\partial B_r(x)}u&=&\int_0^r\left(\frac{d}{dt}\int_{\partial B_1}u(x_0+t\omega)\,d\mathcal{H}^{1}\right)\,dt\\\nonumber
&=&
\int_0^r\frac1{t^{N-1}}\int_{\partial B_t}\na u(x_0+\nu)\cdot \nu \,d\mathcal{H}^{1}\,dt=\\\nonumber
&=&
\int_0^r\frac1{t^{N-1}}\frac{d}{dt}\left(\int_{B_t(x_0)}\na u(x)\cdot \frac{x-x_0}{|x-x_0|}\,dx\right)\,dt=\\\nonumber
&=&\frac1{r^{N-1}}\int_{B_r(x_0)}\na u(x)\frac{x-x_0}{|x-x_0|}\,dx +(N-1)\int_0^r\frac1{t^N}\int_{B_t(x_0)}\na u(x)\frac{x-x_0}{|x-x_0|}\,dx\, dt\\\nonumber
&=&\frac1{r^{N-1}}\int_{B_r(x_0)}\left[\na u(x)-\fint_{B_r(x_0)}\na u\right]\frac{x-x_0}{|x-x_0|}\,dx\\\nonumber
&& +(N-1)\int_0^r\frac1{t^N}\int_{B_t(x_0)}\left[\na u(x)-\fint_{B_t(x_0)}\na u\right]\frac{x-x_0}{|x-x_0|}\,dx\, dt.
\end{eqnarray*}
Therefore, the BMO estimate in \eqref{BMO123}  yields
\begin{equation*}
\left|\frac1{r^{N-1}}\int_{\partial B_r(x)}u\right|\le 3r \|\na u\|_{BMO},
\end{equation*}
which gives the desired result in P.4.

Hence, it remains to show \eqref{BMO123}, that is that locally $\na u \in BMO$. 
In order to provce it, fix~$R\ge r>0$ and
let $v$ be the solution of $\Delta_p v=0$ in $B_{2R}(x_0)$ and
$v=u$ on $\partial B_{2R}(x_0)$. If follows from \cite{DP2} p. 100 that
$$\int_{B_{2R}(x_0)}|\na(u-v)|^p\le CR^N,$$
for some tame constant $C>0$. Notice that, by H\"older inequality, 
\begin{equation}\label{3.2ter}
\int_{B_{2R}(x_0)}|\na(u-v)|^2\le CR^N,
\end{equation}
up to renaming $C$. 

Now, we denote by
$$(\na u)_{x_0,\rho}:=\fint_{B_\rho(x_0)}\nabla u, $$
and we observe that, using H\"older inequality,
\begin{equation}\begin{split}\label{3.2bis}
\int_{B_r(x_0)}|(\na v)_{x_0,r}-(\na u)_{x_0,r}|^2 &\,=
\int_{B_r(x_0)}\left|\frac{1}{|B_r|}\left(\int_{B_r(x_0)}\na v-\na u\right)\right|^2 \\
&\,\le \left(\int_{B_r(x_0)}|\nabla v-\nabla u|\right)^2\\
&\,\le \int_{B_r(x_0)}|\nabla v-\nabla u|^2.
\end{split}\end{equation}
Furthermore, we have the following Campanato growth  type estimate (see  \cite{DiB-M} Theorem 5.1)
\begin{equation}\label{Manf-camp-0}
\int_{B_r(x_0)}|\na v-(\na v)_{x_0,R}|^2\lesssim 
\left(\frac rR\right)^{N+\alpha}\int_{B_R(x_0)}|\na v-(\na v)_{x_0,R}|^2,
\end{equation}
where the symbol $\lesssim$ means that the inequality is true up to a positive constant.  
\allowdisplaybreaks

Therefore, using \eqref{3.2ter}, \eqref{3.2bis} and \eqref{Manf-camp-0}, we have
\begin{eqnarray*}\nonumber
\int_{B_r(x_0)}|\na u-(\na u)_{x_0,r}|^2&\lesssim& \int_{B_r(x_0)}|\na u-\na v|^2+\int_{B_r(x_0)}|\na v-(\na v)_{x_0,r}|^2\\\nonumber
&&+\int_{B_r(x_0)}|(\na v)_{x_0,r}-(\na u)_{x_0,r}|^2\\\nonumber
&\lesssim&\int_{B_r(x_0)}|\na u-\na v|^2+\int_{B_r(x_0)}|\na v-(\na v)_{x_0,r}|^2\\\label{Campanato}
&\lesssim& \int_{B_r(x_0)}|\na u-\na v|^2+\left(\frac rR\right)^{N+\alpha}\int_{B_R(x_0)}|\na v-(\na v)_{x_0,R}|^2\\\nonumber
&\lesssim& \int_{B_r(x_0)}|\na u-\na v|^2\\\nonumber
&&+\left(\frac rR\right)^{N+\alpha}\left[
\int_{B_R(x_0)}|\na v-\na u|^2+ \int_{B_R(x_0)}|\na u-(\na u)_{x_0,R}|^2\right]\\\nonumber
&&+\left(\frac rR\right)^{N+\alpha}\int_{B_R(x_0)}|(\na u)_{x_0,R}-(\na v)_{x_0,R}|^2\\\nonumber
&\lesssim& \int_{B_r(x_0)}|\na u-\na v|^2\\\nonumber
&&+\left(\frac rR\right)^{N+\alpha}\left[
\int_{B_R(x_0)}|\na v-\na u|^2+ \int_{B_R(x_0)}|\na u-(\na u)_{x_0,R}|^2\right]\\\nonumber
&\lesssim& \int_{B_R(x_0)}|\na u-\na v|^2+
\left(\frac rR\right)^{N+\alpha}
 \int_{B_R(x_0)}|\na u-(\na u)_{x_0,R}|^2\\\nonumber
&\lesssim& (R)^N+
\left(\frac rR\right)^{N+\alpha}
 \int_{B_R(x_0)}|\na u-(\na u)_{x_0,R}|^2.\\\nonumber
\end{eqnarray*}

Now, we define
$$ \psi(r):=\sup\limits_{t\leq r} \int\limits_{B_t(x_0)}|\na u-(\na u)_{x_0,t}|^2.$$
It follows from \cite{DiB-M} that
$$\psi(r)\le A\left(\frac rR\right)^{N+\alpha}\psi(R)+B R^N$$
for some positive constants $A$, $B$ and $\alpha$. Applying
Lemma 2.1 from \cite{Giaq} Chapter 3  we  conclude that there exist $c>0$ 
and $R_0>0$ such that
$$\psi(r)\leq cr^N\left(\frac{\psi(R)}{R^N}+B\right)$$
for all $r\le R\le R_0$,
and hence
$$ \int_{B_r(x_0)}|\na u-(\na u)_{x_0,r}|^2 \le C r^N$$
for some tame constant $C>0$.
This shows that~$\nabla u$ is locally BMO and concludes the proof of \eqref{BMO123}. 
The proof of Proposition~\ref{prop:tec} is then complete.
\end{proof}

As a consequence, we have: 

\begin{corollary}\label{coro3.4}
Let $u\in W^{1, p}(\Omega)$ be a local minimizer of $J$. Then for 
any subdomain $D\Subset \Omega$ there is a constant $C>0$ depending on $p, \sup |u|$
and $\dist(D, \partial \Omega)$ such that 
\begin{equation}\label{cod-ball}
\left|\fint_{B_r(x_0)}u\right|\le Cr \quad {\mbox{ for any }} x_0\in \fb{u} \, 
{\mbox{ and }}\, r>0 \, {\mbox{ such that }}\, B_r(x_0)\subset D.
\end{equation}
\end{corollary}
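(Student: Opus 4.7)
The plan is to pass from the spherical mean bound \eqref{coherence u} established in P.4 of Proposition~\ref{prop:tec} to the solid ball mean bound \eqref{cod-ball} by writing the ball integral as an integral of spherical integrals in polar coordinates. Concretely, for $x_0\in\Gamma$ and $B_r(x_0)\subset D$, I would use the layer-cake identity
\begin{equation*}
\fint_{B_r(x_0)} u\,dx \;=\; \frac{N}{r^N}\int_0^r t^{N-1}\left(\fint_{\partial B_t(x_0)} u\,d\mathcal{H}^{N-1}\right)dt,
\end{equation*}
which is just Fubini together with the fact that $|B_r|=\omega_N r^N$ and $|\partial B_t|=N\omega_N t^{N-1}$.

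Next, I would invoke P.4 of Proposition~\ref{prop:tec}: there exist $\bar r>0$ and $C>0$, depending only on $p$, $\sup|u|$ and $\dist(D,\partial\Omega)$, such that $\bigl|\fint_{\partial B_t(x_0)} u\bigr|\le C t$ for all $t\le \bar r$. Plugging this pointwise bound into the identity above gives, for $r\le\bar r$,
\begin{equation*}
\left|\fint_{B_r(x_0)} u\right| \;\le\; \frac{N}{r^N}\int_0^r t^{N-1}\cdot C t\,dt \;=\; \frac{NC}{N+1}\,r,
\end{equation*}
which is \eqref{cod-ball} on the scale $r\le \bar r$.

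For the remaining range $\bar r<r\le R_0:=\dist(D,\partial\Omega)$, there is nothing to prove in terms of smallness: since $u$ is bounded on $D$ by P.3 (which in particular yields $u\in L^\infty_{\mathrm{loc}}$), one has
\begin{equation*}
\left|\fint_{B_r(x_0)}u\right|\le \sup_D|u| \;\le\; \frac{\sup_D|u|}{\bar r}\, r,
\end{equation*}
so after enlarging the constant $C$ to absorb $\sup_D|u|/\bar r$ the estimate \eqref{cod-ball} holds for every admissible $r$. I do not foresee a genuine obstacle here: the only care needed is to keep track of the dependence of constants on $p$, $\sup|u|$ and $\dist(D,\partial\Omega)$, so that the corollary inherits exactly the dependence claimed in its statement from the dependence already given in P.4.
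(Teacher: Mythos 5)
Your proposal is correct and is essentially the argument the paper intends: the corollary is stated as an immediate consequence of P.4, obtained by averaging the spherical bound $\bigl|\fint_{\partial B_t(x_0)}u\bigr|\le Ct$ over $t\in(0,r)$ in polar coordinates, exactly as you do. The additional remark handling the range $\bar r<r$ via the $L^\infty$ bound is a harmless and correct way to cover all admissible radii with the claimed dependence of the constant.
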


\subsection{A remark on the two-phase problem}\label{rem:one phase}
We can write the functional in~\eqref{Ju} as
$$ J(u)= \int_{\Omega}|\nabla u|^p +\Lambda\chi_{\{u>0\}}
+\lambda^p_-|\Omega|,$$
with~$\Lambda:=\lambda^p_+-\lambda^p_->0$.
Notice that the last term does not affect the minimization problem,
and so if~$u$ is a minimizer for~$J$, then it is also a minimizer for
\begin{equation}\label{erdfgvxb}
\tilde{J}(u):= \int_{\Omega}|\nabla u|^p +\Lambda\chi_{\{u>0\}}.
\end{equation}
Observe that the free boundary~$\partial\{u>0\}\cap\partial\{u\le 0\}$
for the minimizer~$u$ of~$J$ coincides with~$\partial\{u>0\}$ if $\Lambda>0$, 
see e.g. Subsection 3.4 in \cite{DK}.

\subsection{Alt-Caffarelli-Friedman monotonicity formula}
Here we recall a result obtained in~\cite{CKSh}, see in particular Lemmata 2.2 and 2.3
there.
\begin{theorem}\label{Luis-mon}
Let $p=2$ and $u^\pm$ be two continuous subharmonic functions with disjoint
supports in $B_1$ and such that $u^\pm(0)=0$. 

Then we have that
$$\phi_2'(r, u, x_0)\geq \frac2 r\phi_2(r, u, x_0)(\gamma(\Gamma_+)+\gamma(\Gamma_-)-2), $$
where $\varphi_2$ has been introduced in \eqref{varphi} and 
$$ \gamma(\Gamma_\pm)\left( \gamma(\Gamma_\pm) +N-2\right) = 
\inf_{ v\in W_0^{1,2}(\Gamma_\pm) } 
\frac{ \int_{\Gamma_\pm} |\nabla_\theta v|^2 }{\int_{\Gamma_\pm} v^2}. $$

Furthermore, let $\gamma(r):=\gamma(\Gamma_+)+\gamma(\Gamma_-)-2$.
Then  $\gamma(r) \ge  0$ for all small $r$.
Moreover the strict inequality holds unless $\Gamma_\pm^*$ are both half-spheres.
In particular if any of the $\Gamma^*_\pm$ 
digresses from being a half-spherical cap by an area-size of $\epsilon$, say, then
$$\gamma(r) > C\epsilon^2,$$
for some $C>0$. Here $E^*$ stands for the spherical  symmetrisation of $E$.
\end{theorem}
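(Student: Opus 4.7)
My plan is to follow the original Alt--Caffarelli--Friedman route, working with $x_0=0$ for brevity. Introduce
$$I_\pm(r):=\int_{B_r}\frac{|\nabla u^\pm|^2}{|x|^{N-2}}\,dx,$$
so that $\phi_2(r)=I_+(r)I_-(r)/r^4$ and the co-area formula gives
$$\frac{\phi_2'(r)}{\phi_2(r)}=\frac{I_+'(r)}{I_+(r)}+\frac{I_-'(r)}{I_-(r)}-\frac{4}{r}.$$
The whole monotonicity statement then reduces to the one-sided inequality
$$\frac{I_\pm'(r)}{I_\pm(r)}\ge \frac{2\gamma(\Gamma_\pm)}{r},\qquad(\star)$$
since adding the $+$ and $-$ bounds and subtracting $4/r$ produces exactly $(2/r)(\gamma(\Gamma_+)+\gamma(\Gamma_-)-2)$.

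To prove $(\star)$ I would decompose, on $\partial B_r$, the integrand as $|\nabla u^\pm|^2=(\partial_\nu u^\pm)^2+r^{-2}|\nabla_\theta u^\pm|^2$. The tangential part is bounded below by plugging $u^\pm$ into the Rayleigh quotient defining $\gamma(\Gamma_\pm)$, which is admissible because $u^\pm$ vanishes on $\partial B_r\setminus r\Gamma_\pm$. The radial part is handled by Cauchy--Schwarz against $u^\pm\partial_\nu u^\pm$, and the resulting cross-term $\int_{\partial B_r}u^\pm\partial_\nu u^\pm/r^{N-2}\,d\sigma$ is linked back to $I_\pm(r)$ through the integration by parts applied to $\div(u^\pm\nabla u^\pm/|x|^{N-2})$ on $B_r$: the bulk side splits into $I_\pm(r)$ plus a signed term carrying $u^\pm\Delta u^\pm\ge 0$, which is meaningful distributionally thanks to continuity and subharmonicity of $u^\pm$. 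The resulting algebraic inequality is a quadratic in a single scalar ratio, and its minimization yields $(\star)$, with equality only when $u^\pm$ is homogeneous of degree $\gamma(\Gamma_\pm)$ on $B_r$.

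For the remaining assertions I would invoke spherical symmetrization after Sperner--Friedman: replacing each $\Gamma_\pm$ by the geodesic cap $\Gamma_\pm^*$ of equal surface measure can only decrease $\gamma$, so matters reduce to complementary caps. For those, $\gamma^*(\theta)$ is an explicit function of the opening angle (computable through Gegenbauer or, in low dimension, Legendre functions), and one checks that $\gamma^*(\theta)+\gamma^*(\pi-\theta)$ attains its minimum value $2$ exactly at the equator $\theta=\pi/2$; this yields $\gamma(r)\ge 0$ with equality only for complementary hemispheres. The quantitative refinement $\gamma(r)>C\epsilon^2$ is then a second-order Taylor expansion of $\gamma^*(\theta)+\gamma^*(\pi-\theta)$ at $\theta=\pi/2$: the first-order term vanishes by symmetry, the second-order coefficient is strictly positive, and an area deviation of size $\epsilon$ from a hemisphere forces a $\gamma$-excess of the stated order.

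The main obstacle will be the careful execution of $(\star)$, because the weight $|x|^{-(N-2)}$ generates a radial cross-term on integration by parts that must be precisely absorbed by the subharmonic sign $u^\pm\Delta u^\pm\ge 0$; losing this cancellation would destroy the sharp form of the one-sided bound. A secondary technical point is extracting the sharp $\epsilon^2$ constant in the final step, which requires either an explicit spectral computation on spherical caps or a quantitative stability version of Sperner's symmetrization inequality.
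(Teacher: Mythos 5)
The paper does not actually prove this statement: Theorem~\ref{Luis-mon} is recalled from \cite{CKSh} (Lemmata 2.2 and 2.3 there), and only its two-dimensional version is used later. Your outline is precisely the standard Alt--Caffarelli--Friedman/CKSh route --- the logarithmic-derivative splitting, the one-sided bound $I_\pm'/I_\pm\ge 2\gamma(\Gamma_\pm)/r$ via the radial/tangential decomposition, Cauchy--Schwarz and the Rayleigh quotient, and spherical symmetrization for the geometric part --- so it coincides with the cited source's argument and is correct in outline. It is worth noting that in the case $N=2$ that the paper actually uses, both obstacles you flag disappear: the weight $|x|^{2-N}$ is identically $1$ (so no cross-term arises in the integration by parts), and the characteristic constant of an arc of opening $\omega$ is explicitly $\gamma=\pi/\omega$, whence $\gamma_++\gamma_-\ge \pi/\omega+\pi/(2\pi-\omega)=2\pi^2/\bigl(\omega(2\pi-\omega)\bigr)\ge 2+2\epsilon^2/\pi^2$ whenever $|\omega-\pi|\ge\epsilon$, which yields the quantitative $C\epsilon^2$ bound without any Gegenbauer or Legendre computation.
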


We will use here only the two dimensional version of Theorem \ref{Luis-mon}.

\subsection{Some estimates for capacity}
In this  section we gather some well-known facts about the capacity on the plane 
and the one dimensional Hausdorff measure. So, we fix $N=2$ and, for $\rho>0$, we define 
\begin{equation}
\mathcal H^1_\rho(E):=\inf\sum_i r_i,
\end{equation}
where the infimum is taken over all the coverings of $E\subset \R^2$ by countably many balls of radii
$r_i\le \rho$. Clearly $\mathcal H_\rho^1(E)$ is a decreasing function of $\rho$, 
hence if $\mathcal H^1(E):=\lim_{\rho\to 0}\mathcal H^1_\rho(E)$ exists
then it is  called the one dimensional Hausdorff measure of $E$.
It is also useful to define the set function $\mathcal H^1_\infty$ called the Hausdorff content. 

Throughout this paper the 
$C_{1, \ell}$ capacity, defined in \cite{Adams} page 20, is denoted by $\ca_\ell$.
Let 
$\ca_{\ell}(E,Q)$ be the $\ell$ capacity of $E\subset Q$ where 
$Q\subset \R^2$ is a square and $1<\ell<2$.

We have the following lower estimate for the capacity in terms of the Hausdorff content, see e.g. 
Corollary 5.1.14,  inequality (5.1.3) in \cite{Adams}:
\begin{equation}\label{lower-cap-H-0}
\ca_\ell(E, Q)\ge \ca_\ell (E, \R^2)\ge A (\mathcal H_\infty^1(E))^{2-\ell}
\end{equation}
for a tame constant $A>0$.

It is convenient to formulate a version of \eqref{lower-cap-H-0} replacing the 
Hausdorff content with the measure $\mathcal H^1$.    
For this, let $E:=\partial\{v>0\}$, 
for some continuous function $v\in C(Q)$ such that $\fb v$ is connected,  
the centre of the square  $Q$ belongs to 
$\fb{v}$ and $\fb{v}\cap \partial Q\not=\emptyset$. If $\mathcal L_0$ is a line 
passing though the centre of $Q$ and a point on $\fb{v}\cap \partial Q$ then 
the $\mathcal H^1$ measure of the projection of $\fb{v}$ on $\mathcal L_0$ 
is at least $\frac{\text{diam} Q}{2\sqrt{2}}$.
Let now $\sigma>0$ be such that 
$\mathcal H^1(E)>\sigma$. Observe that there is $\rho_0>0$ such that $\sum_ir_i\ge \sigma$ if
$r_i\leq \rho_0$, for all coverings of $E$ by countably many balls of radii $r_i\le \rho_0$.
Moreover, for all the other coverings we have that
$\sum_{i}r_i\ge\sum_{i=1}^{[\frac{\sigma}{2\rho_0}]} \rho_0  \ge\frac\sigma4$.
Thus, choosing $\sigma:=\frac1{2}\mathcal H^1(E)$, we get from \eqref{lower-cap-H-0} 
\begin{equation}\label{lower-cap-H}
\ca_\ell(E, Q)\ge \ca_\ell (E, \R^2)\ge A \left(\frac{\mathcal H^1(E)}{8}\right)^{2-\ell}.
\end{equation}

We will also need another lower estimate for the capacity, see e.g. \cite{Frehse} page 5:
\begin{equation}\label{lower-cap-L}
\ca_\ell(E, \R^2)\ge A |E|^{1-\frac\ell 2},
\end{equation}
where $|E|$ is the Lebesgue measure on the plane.

Finally we state the Poincar\'e inequality for $v\in W^{1, \ell}$:
there is a tame constant $c>0$ such that 
\begin{equation}\label{Poinc}
\int_{D}|v|^{\ell}\le \frac{c}{\ca_{\ell}(\{v=0\},D)}\int_{D}|\nabla v|^{\ell},
\end{equation}
where $D$ is a ball or a square, see \cite{Frehse} pages 15-16.

\subsection{Gehring's Lemma}\label{ap:geh}

Here we recall the Gehring's result on the higher integrability, 
see \cite{Giaq}, Proposition 1.1, page 122.
\begin{prop}\label{prop-Geh}
Let $Q$ be a square and $r>q\ge1$. 
Suppose that $f\in L^r(Q)$, and that
\begin{equation}\label{eq-Geh0}
\fint\limits_{Q_R(x_0)}g^q\le b \left(\fint\limits_{Q_{2R}(x_0)}g\right)^q+\fint\limits_{Q_{2R}(x_0)}f^q+\theta\fint\limits_{Q_{2R}(x_0)}g^q
\end{equation}
for each $x_0\in Q$ and each $R<\min\{\frac12\dist(x_0, \partial Q), R_0\}$, where
$R_0$, $b$ and $\theta$ are constants with $b>1$, $R_0>0$ and $0\leq \theta<1$. 

Then there exist $\e>0$ and $c>0$ such that
$g\in L_{\rm loc}^p(Q)$ for $p\in [q, q+\e)$ and
\begin{equation}\label{eq-Geh1}
\left(\fint\limits_{Q_R}g^p\right)^{\frac1p}\le c\left\{
\left(\fint\limits_{Q_{2R}}g^q\right)^{\frac1q}+
\left(\fint\limits_{Q_{2R}}f^pdx\right)^{\frac1p}\right\},
\end{equation}
for any $R<R_0$ such that $Q_{2R}\subset Q$, 
where $c$ and $\e$ are positive constants depending on $b,\theta,q$ and $r$.
\end{prop}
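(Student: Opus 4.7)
The plan is to follow the classical Calderón--Zygmund stopping time argument of Gehring, as adapted by Giaquinta and Modica. The key idea is to turn the reverse Hölder-type hypothesis \eqref{eq-Geh0} into a distributional inequality comparing the level sets of $g^q$ at levels $t$ and $\alpha t$ for some small $\alpha$, and then integrate this inequality against $t^{\varepsilon-1}$ to gain the extra integrability.

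First, I would normalize by fixing a sub-square $Q_0\Subset Q$ and a base level
\[
t_0 := \left(\fint_{Q_0} g^q\right)^{1/q} + 1,
\]
and then, for each $t>t_0$, perform a Calderón--Zygmund decomposition of $Q_0$ at the level $t^q$ for the function $g^q$. This produces a disjoint family of dyadic subcubes $\{Q_i\}$ of $Q_0$ such that on each $Q_i$ one has $t^q < \fint_{Q_i} g^q \le 2^{N} t^q$ (the doubling ratio coming from the dyadic subdivision), while $g^q \le t^q$ a.e.\ outside $\bigcup_i Q_i$. The selection guarantees that the concentric doubled cubes $2Q_i$ still satisfy $\fint_{2Q_i} g^q \le c\, t^q$ for a dimensional constant $c$.

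Next, on each such $Q_i$ I would apply the structural inequality \eqref{eq-Geh0}:
\[
\fint_{Q_i} g^q \le b\left(\fint_{2Q_i} g\right)^{q} + \fint_{2Q_i} f^q + \theta \fint_{2Q_i} g^q.
\]
Splitting the right-hand side over the regions $\{g \le \alpha t\}$ and $\{g > \alpha t\}$ for a small $\alpha\in(0,1)$ to be chosen, and similarly for $f$, the low-level contribution from the $\bigl(\fint_{2Q_i} g\bigr)^q$ term is bounded by $b\alpha^{q-1}\fint_{2Q_i}g \cdot t^{q-1}\cdot (\text{stuff})$, while the absorption term carries the factor $\theta$. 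Summing over $i$ and using the $\fint_{2Q_i} g^q \le c t^q$ bound I get an estimate of the form
\[
\int_{\{g>t\}} g^q \,dx \;\le\; C\bigl(b\alpha^{q-1}+\theta\bigr) \int_{\{g>\alpha t\}} g^q\,dx \;+\; C \int_{\{f>\alpha t\}} f^q\,dx \;+\; C\alpha^q t^q |Q_0|.
\]
The main obstacle is precisely to balance $\alpha$ and the structural constants $b,\theta$ so that $\eta := C(b\alpha^{q-1}+\theta)\alpha^{\varepsilon}<1$ after integration; this is where the hypothesis $0\le \theta<1$ is critical, since one first picks $\alpha$ small to beat the $b$ term (possible because $q>1$ makes $\alpha^{q-1}$ small), and then chooses $\varepsilon$ small so that $\alpha^{\varepsilon}$ stays close to $1$ and the absorption factor still beats $\theta$.

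Finally, I would multiply the good-$\lambda$ inequality above by $t^{\varepsilon-1}$ and integrate from $t_0$ to $\infty$. Using the layer-cake identity
\[
\int_{Q_0} g^{q+\varepsilon}\,dx \;=\; (q+\varepsilon)\int_0^\infty t^{\varepsilon-1}\!\!\int_{\{g>t\}} g^q\,dx\,\frac{dt}{t}\cdot\text{(const)} \;+\; (\text{base level term}),
\]
the term with $g$ on the right absorbs into the left thanks to $\eta<1$, while the $f$ term is controlled by $\|f\|_{L^{q+\varepsilon}}$ (which is finite because $f\in L^r$ with $r>q$, so we may take $\varepsilon<r-q$). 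A standard truncation argument (replace $g$ by $\min(g,M)$ and let $M\to\infty$) ensures the left-hand side is finite before the absorption step. Rearranging and normalizing by $|Q_{2R}|$ yields exactly \eqref{eq-Geh1}, with $\varepsilon$ and $c$ depending on $b,\theta,q,r$ as claimed.
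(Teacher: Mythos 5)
The paper does not actually prove this proposition: it is quoted as a known result (Gehring's lemma in the Giaquinta--Modica form) with a pointer to \cite{Giaq}, Proposition 1.1, p.~122, so your sketch has to be measured against the standard literature argument rather than against anything in the paper. Your overall strategy --- Calder\'on--Zygmund stopping at level $t^q$ for $g^q$, a level-set inequality relating $\{g>t\}$ to $\{g>\alpha t\}$, multiplication by $t^{\e-1}$ and absorption --- is indeed that argument. Two of your steps, however, would fail as written.

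First, the term $C\alpha^q t^q|Q_0|$ in your intermediate inequality is fatal for the integration step: $\int_{t_0}^\infty t^{\e-1}t^q\,dt=+\infty$. In the correct argument this low-level contribution is not left as $t^q|Q_0|$; it is reabsorbed using the stopping-time lower bound $t^q|Q_i|<\int_{Q_i}g^q$ together with $\sum_i\int_{Q_i}g^q\le(1-\alpha^q)^{-1}\int_{\{g>\alpha t\}}g^q$, so that it becomes another small multiple of $\int_{\{g>\alpha t\}}g^q$. Second, the smallness mechanism $b\alpha^{q-1}$ is not what your splitting produces: bounding $\bigl(\fint_{2Q_i}g\chi_{\{g>\alpha t\}}\bigr)^q$ by Jensen/H\"older yields a coefficient of order $b2^{q-1}$ in front of $\fint_{2Q_i}g^q\chi_{\{g>\alpha t\}}$, with no $\alpha$-gain, while writing $t^{q-1}g\le\alpha^{1-q}g^q$ on $\{g>\alpha t\}$ makes the constant large, not small. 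The standard proof keeps the gain term in the weaker form $t^{q-1}\fint_{2Q_i}g\chi_{\{g>\alpha t\}}$ (using $\fint_{2Q_i}g\le ct$ from the stopping construction) and wins only after the $t$-integration, where $\int_{t_0}^{g/\alpha}t^{q+\e-2}\,dt\sim g^{q+\e-1}/(q+\e-1)$ is compared with the factor $1/\e$ coming from the left-hand side; the absorption constant is then $\theta+C\e/(q+\e-1)<1$ for small $\e$, which uses both $\theta<1$ and $q>1$. This also exposes a defect of the statement as transcribed ($q\ge1$): for $q=1$ the hypothesis is essentially vacuous and the mechanism genuinely breaks down (Giaquinta assumes $q>1$). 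Finally, when you sum over the stopping cubes you must make sure the $\theta$-term reappears with coefficient $\theta$ rather than $C\theta$ with $C>1$, which requires controlling the overlap of the dilated cubes $2Q_i$; otherwise the absorption fails for $\theta$ close to $1$.
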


%
%
\section{Proof of Theorem~\ref{TH2}}\label{s1}

\subsection{Step 0: Heuristic discussion}
We will prove Theorem~\ref{TH2} using a contradiction argument.
That is, we assume that there exist~$p_j\to 2$, with $p_j>2$, $x_j\in\Gamma$ and~$r_j\to 0$, 
as $j\to+\infty$, such that $h(x_j, r_j)> h_0\,r_j$ and
\begin{equation}\label{contradiction}
\varphi_{p_j}(r_j,u,x_j)>\varphi_{p_j}(3r_j,u,x_j).
\end{equation}
We set
\begin{equation}\label{Sj}
S^\pm_j:=\left(\fint_{B_{3 r_j}(x_j)}|\nabla u^\pm(y)|^{p_j}\,dy\right)^{1/p_j},
\end{equation}
and introduce the scaled functions
\begin{equation}\label{uj}
u^{\pm}_j(x):=\frac{u^{\pm}(x_j+r_jx)}{r_j\,S_j^\pm}.
\end{equation}
By construction we have
\begin{equation}\label{bbb00}
\|\nabla u^{\pm}_j\|_{L^{p_j}(B_3)}=3^{2/p_j}\le 3.
\end{equation}
Hence, from~\eqref{contradiction} we deduce that
\begin{equation}\label{pr-probe}
 \varphi_{p_j}(1,u_j,0)>\varphi_{p_j}(3,u_j,0)=1,
 \end{equation}
or equivalently
\begin{equation}\label{bb0}
\int_{B_1}|\nabla u^+_j|^{p_j}\,\int_{B_1}|\nabla u^-_j|^{p_j}>1.
\end{equation}
Thanks to the uniform bound \eqref{bbb00} we can extract a subsequence that weakly converges
to some $u^\pm_0\in W^{1,2}(B_1)$. Consequently,
from the semicontinuity of the Dirichlet's integral we have that
\begin{equation}\label{phi-lowlim}
\varliminf\varphi_{p_j}(1,u_j,0)\ge\varliminf \varphi_{p_j}(3,u_j,0)\ge \phi_2(3, u_0, 0).
\end{equation}
In order to handle the limit on the left hand side we need strong convergence of $\nabla u_j^\pm$ in, say,
$L^2(B_1)$ (e.g. it will suffice to have \textit{uniform higher integrability}
of $\{\nabla u_j\}$, for instance  $|\nabla u_j|\in L^q(B_1)$ for some fixed
$q>2$, which we will prove using the Gehring's lemma). Suppose for a moment that this is true,
then passing to the limit in (\ref{phi-lowlim}) we infer the inequality
\begin{equation}\label{lim-phi-0}
\phi_2(1,u_0, 0)\ge \phi_2(3, u_0, 0).
\end{equation}
Note that thanks to the uniform convergence $u_j\to u_0$, as $j\to+\infty$, 
(due to the estimate $|\na u|\in L^{q}_{loc}$, with $q>2$, and the Sobolev embedding), 
we obtain that \eqref{delta cond} translates to
\begin{equation}\label{non-flat-0}
h(0, 1)\ge h_0.
\end{equation}
Furthermore, from P.1 in Proposition \ref{prop:tec} we have that $\Delta_{p_j}u_j^\pm\ge0$,
and this translates to $\Delta u^\pm_0\ge0$ in view of the $W^{1,q}$ estimate for $q>2$.

If both the functions $u_0^\pm$ do not vanish (i.e. both $u_j^+$ and $u_j^-$ are
\textit{non-degenerate})
then $u^\pm_0$ are admissible functions in Theorem~\ref{Luis-mon},
and we infer  from \eqref{lim-phi-0} that
$u_0=u^+_0-u_0^-$ is a two-plane solution in $B_3\setminus B_1$.
Consequently, employing some standard unique continuation
results for harmonic functions we shall conclude that $u_0$ is a two-plane solution in $B_3$ which,  however, will be
in contradiction with \eqref{non-flat-0} and the proof will follow.
\medskip 

Now we begin with the actual proof of Theorem \ref{TH2}.
It is convenient to split the proof into a number of steps,
which in combination shall yield the proof of Theorem \ref{TH2}. In Step 1 below
we prove that the scaled functions defined in \eqref{uj} 
remain uniformly non-degenerate  in $L^2(B_2)$. Step
2, which is the most technical one,  takes care of
the higher integrability of the gradient of the scaled functions $\nabla u_j$, allowing us to
pass to the the limit in \eqref{phi-lowlim}. To do so we employ the Gehring's Lemma 
(recall Proposition \ref{prop-Geh}) 
and the Caccioppoli's inequality. One more technical issue that arises here
is to establish a Poincar\'e type estimate for the scaled functions $u^\pm_0$.
In Step 3 and Step 4 we perform a
gap filling argument based on some ideas from the unique continuation theory,
allowing us to extend the linearity of $u_0$ from $B_3\setminus B_1$ into $B_1$.

\medskip
\subsection{Step 1: Non-degeneracy}
In order to take the limit of the scaled functions~$u^{\pm}_j$ as~$j\rightarrow+\infty$
(recall \eqref{uj}),
we need to ensure that both~$u_j^+$ and~$u^-_j$ do not vanish identically.
Lemma to follow provides a lower bound in term of $L^p$ integrals.

\begin{lemma}\label{lem-nondeg}
Let~$u_j^{\pm}$ be as in~\eqref{uj}. Then, there exists~$C_0>0$
independent of~$j$ such that
$$ \int_{B_2}|u^+_j|^{p_j}\,\int_{B_2}|u^-_j|^{p_j}\ge C_0.$$
\end{lemma}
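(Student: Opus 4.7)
The plan is to deduce Lemma~\ref{lem-nondeg} from a Caccioppoli-type inequality for $p_j$-subharmonic functions, combined with the gradient lower bound~\eqref{bb0}. By P.1 in Proposition~\ref{prop:tec}, we have $\Delta_p u^\pm \ge 0$ on $\Omega$ in the distributional sense; since the $p_j$-Laplacian transforms covariantly under the anisotropic scaling defining $u_j^\pm$ in~\eqref{uj}, each $u_j^\pm$ is a non-negative $p_j$-subharmonic function on $B_3$.

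Next, I would fix a smooth cutoff $\eta$ with $\eta\equiv1$ on $B_1$, $\supp\eta\subset B_2$ and $|\na\eta|\le2$, and test the distributional inequality $\Delta_{p_j}u_j^\pm\ge0$ against the non-negative test function $\phi=u_j^\pm\,\eta^{p_j}$. Expanding the gradient of $\phi$ and applying Young's inequality with conjugate exponents $p_j/(p_j-1)$ and $p_j$ to the mixed term allows one to absorb the gradient term on the left, yielding the standard Caccioppoli estimate
\begin{equation*}
\int_{B_1}|\na u_j^\pm|^{p_j}\le C\int_{B_2}|u_j^\pm|^{p_j},
\end{equation*}
where $C$ depends only on $p_j$ and $\|\na\eta\|_\infty$; since $p_j\to 2$ stays in a compact interval, the constant $C$ can be chosen independently of $j$.

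Finally, multiplying the $+$ and $-$ versions of the Caccioppoli inequality and combining with~\eqref{bb0}, I would obtain
\begin{equation*}
1<\int_{B_1}|\na u_j^+|^{p_j}\,\int_{B_1}|\na u_j^-|^{p_j}\le C^2\int_{B_2}|u_j^+|^{p_j}\,\int_{B_2}|u_j^-|^{p_j},
\end{equation*}
which gives the claimed bound with $C_0:=1/C^2$.

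The only mild obstacle is the $j$-dependence of the exponent: one must verify that the constant produced by Young's inequality (and thus the Caccioppoli constant) remains uniformly bounded as $p_j\to 2^+$, but this is straightforward from the continuous dependence of Young's constants on the exponents. Notably, the non-flatness assumption~\eqref{delta cond} plays no role in this step; it will enter later when extracting the limit profile $u_0$.
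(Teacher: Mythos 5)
Your proposal is correct and follows essentially the same route as the paper: both use the $p_j$-subharmonicity of $u_j^\pm$ inherited from P.1 under scaling, test it against $u_j^\pm\,\eta^{p_j}$ for a cutoff $\eta$ between $B_1$ and $B_2$, derive the Caccioppoli estimate $\int_{B_1}|\nabla u_j^\pm|^{p_j}\le C\int_{B_2}|u_j^\pm|^{p_j}$ with a constant uniform in $j$ since $2<p_j<p_0$, and conclude by multiplying the two estimates and invoking \eqref{bb0}. The only cosmetic difference is that you absorb the cross term via Young's inequality whereas the paper uses H\"older's inequality and divides; the outcome is identical.
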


\begin{proof}
From the scaling properties of the operator $\Delta_p$ it follows that 
$u_j^+$ is~$p_j$-subharmonic in~$B_3$. 
Therefore, we have that, for any~$\psi\in C^1_0(B_3)$, with $\psi\ge0$,
\begin{equation}\label{subsol}
\int_{B_3}|\nabla u^+_j|^{p_j-2}\nabla u_j^+\cdot\nabla\psi \le 0.
\end{equation}
Now, we consider a cutoff function~$\eta\in C^\infty(B_3)$ such that~$\eta\ge0$, $\eta\equiv 0$ in~$B_3\setminus B_2$ and~$\eta\equiv 1$ in~$B_1$, and
we take $\psi:=u^+_j\,\eta^{p_j}$ in~\eqref{subsol}. We obtain
$$ \int_{B_3}|\nabla u^+_j|^{p_j}\eta^{p_j} + p_j\,\int_{B_3}|\nabla u^+_j|^{p_j-2} u_j^+\,\eta^{p_j-1} \nabla u^+_j\cdot\nabla\eta\le 0,$$
which implies, using H\"older's inequality,
\begin{eqnarray*}
\int_{B_3}|\nabla u^+_j|^{p_j}\eta^{p_j} &\le & p_j\,\int_{B_3}\left(|\nabla u^+_j|^{p_j-1}\eta^{p_j-1}\right)\left(u^+_j|\nabla\eta|\right) \\
&\le & p_j\, \left(\int_{B_3}|\nabla u^+_j|^{p_j}\eta^{p_j}\right)^{\frac{p_j-1}{p_j}}\left(\int_{B_3}|u^+_j|^{p_j}\,|\nabla\eta|^{p_j}
\right)^{\frac{1}{p_j}}.
\end{eqnarray*}
This gives that
$$ \int_{B_3}|\nabla u^+_j|^{p_j}\eta^{p_j}\le p_j^{p_j}\int_{B_3}|u^+_j|^{p_j}\,|\nabla\eta|^{p_j}.$$
Therefore, recalling the properties of~$\eta$, we obtain that
\begin{equation}\label{bb1}
\int_{B_1}|\nabla u^+_j|^{p_j} \le \int_{B_3}|\nabla u^+_j|^{p_j}\eta^{p_j} \le
p_j^{p_j}\int_{B_3}|u^+_j|^{p_j}\,|\nabla\eta|^{p_j} \le C\,p_j^{p_j}
\int_{B_2} |u^+_j|^{p_j},
\end{equation}
for some~$C>0$ independent of~$j$.

Notice that a similar result holds if we substitute~$u^+_j$ with~$u^-_j$
in the previous computations. Namely,
$$ \int_{B_1}|\nabla u^-_j|^{p_j} \le C\,p_j^{p_j}
\int_{B_2} |u^-_j|^{p_j}.$$
Combining this and~\eqref{bb1} and using~\eqref{bb0}, we get
\begin{eqnarray*}
\int_{B_2} |u^+_j|^{p_j}\,\int_{B_2} |u^-_j|^{p_j} &\ge &
\frac{1}{C^2\,p_j^{2p_j}}\,\int_{B_1}|\nabla u^+_j|^{p_j}\,\int_{B_1}|\nabla u^-_j|^{p_j} \\
&>& \frac{1}{C^2\,p_j^{p_j}} \\
&\ge & C_0,
\end{eqnarray*}
for a suitable~$C_0>0$ independent of~$j$ (recall that~$2<p_j<p_0$).
This concludes the proof of Lemma~\ref{lem-nondeg}.
\end{proof}

\medskip
\subsection{Step 2: Higher integrability}
The next result is based on the Gerhing's Lemma (see \cite{Giaq} page 122
and Proposition~\ref{prop-Geh} here) and allows us to obtain
higher integrability of~$\nabla u^{\pm}_j$ and thus to justify
the passage to the limit and infer \eqref{lim-phi-0}.

\begin{lemma}\label{lem-Gehring}
Let~$u_j^{\pm}$ be as in~\eqref{uj}. Then there exist~$q>2$ and~$C>0$
independent of~$j$ such that
$$ \|\nabla u^{\pm}_j\|_{L^q(B_1)}\le C.$$
\end{lemma}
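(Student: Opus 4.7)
The plan is to establish a reverse H\"older inequality for $|\nabla u_j^\pm|$ and then invoke Gehring's Lemma (Proposition~\ref{prop-Geh}). Concretely, I would prove that for every ball $B_{2R}(x_0)\subset B_{3/2}$,
\begin{equation}\label{pl:RH}
\fint_{B_R(x_0)}|\nabla u_j^\pm|^{p_j} \le C \left(\fint_{B_{2R}(x_0)} |\nabla u_j^\pm|^{q_j}\right)^{p_j/q_j},
\end{equation}
where $q_j:=\frac{2p_j}{2+p_j}<p_j$ is chosen so that the Sobolev conjugate exponent $q_j^*$ equals $p_j$ in dimension $N=2$, and $C$ is independent of $j$, $x_0$ and $R$. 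Once \eqref{pl:RH} holds with uniform constants, Proposition~\ref{prop-Geh} directly produces an exponent $q>p_j\ge 2$ (independent of $j$) and the uniform bound $\|\nabla u_j^\pm\|_{L^q(B_1)}\le C$.

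The starting point is a Caccioppoli estimate. Since $u_j^\pm$ is $p_j$-subharmonic by P.1 of Proposition~\ref{prop:tec} (a property preserved under the scaling defining $u_j^\pm$ in \eqref{uj}), testing the subsolution inequality with $u_j^\pm\eta^{p_j}$ exactly as in the proof of Lemma~\ref{lem-nondeg} yields
$$\fint_{B_R(x_0)}|\nabla u_j^\pm|^{p_j}\le \frac{C}{R^{p_j}}\fint_{B_{2R}(x_0)}|u_j^\pm|^{p_j}.$$
To pass from this to \eqref{pl:RH} a Sobolev--Poincar\'e inequality is needed for $u_j^\pm$ itself, rather than for $u_j^\pm$ minus its mean. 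This is the main technical step, and it is where the capacity tools collected earlier enter: by the capacity Poincar\'e inequality~\eqref{Poinc} it suffices to establish the uniform lower bound
$$\ca_{q_j}\!\bigl(\{u_j^\pm=0\}\cap B_{2R}(x_0),\,B_{2R}(x_0)\bigr)\ge c\,R^{2-q_j}.$$

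To obtain this bound I would split into two cases according to the position of $B_{2R}(x_0)$ relative to the rescaled free boundary $\partial\{u_j>0\}$. If the ball does not meet this free boundary then one of $u_j^\pm$ vanishes identically on $B_{2R}(x_0)$ (and the bound is trivial) while the other is $p_j$-harmonic; in that case the classical Meyers reverse H\"older, obtained from Caccioppoli with the mean subtracted together with the standard Sobolev--Poincar\'e, gives \eqref{pl:RH} directly. If instead $B_{2R}(x_0)$ meets the free boundary, then the density lower bounds at free boundary points, standard for minimizers of Alt--Caffarelli-type functionals, give $|\{u_j\le 0\}\cap B_{2R}(x_0)|\gtrsim R^2$ and $|\{u_j>0\}\cap B_{2R}(x_0)|\gtrsim R^2$; the Lebesgue-measure capacity bound~\eqref{lower-cap-L}, rescaled from $\mathbb{R}^2$ to $B_{2R}(x_0)$, then delivers the required lower bound on $\ca_{q_j}$.

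The main obstacle is keeping all the constants---those in the Caccioppoli step, in the capacity Poincar\'e inequality~\eqref{Poinc}, in the Lebesgue--capacity bound~\eqref{lower-cap-L} and in Gehring's Lemma itself---uniform as $p_j\to 2^+$, so that the exponent $q$ produced by Gehring stays bounded away from $2$ and the final $L^q$ bound does not degenerate. Since $q_j\to 1^+$ and both the Sobolev--Poincar\'e constant and the exponent $2-q_j$ in \eqref{lower-cap-L} stabilize in the plane as the integrability exponent approaches $1$, this verification is routine but requires some care.
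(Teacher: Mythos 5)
Your overall architecture (Caccioppoli for the $p_j$-subsolution, Sobolev--Poincar\'e with the exponent $2p_j/(2+p_j)$, a capacitary lower bound on the zero set feeding the Poincar\'e inequality \eqref{Poinc}, then Gehring) is exactly the paper's strategy. But there is a genuine gap at the step you dismiss as standard: when $B_{2R}(x_0)$ meets $\partial\{u_j>0\}$ you invoke a density lower bound $|\{u_j\le 0\}\cap B_{2R}(x_0)|\gtrsim R^2$ ``standard for minimizers of Alt--Caffarelli-type functionals.'' No such estimate is available here. The functional only penalizes $\{u>0\}$ (Subsection~\ref{rem:one phase}), the solution changes sign, and at a free boundary point of the two-phase problem neither $\{u<0\}$ nor $\{u=0\}$ is known to carry positive Lebesgue density --- indeed P.2 of Proposition~\ref{prop:tec} is precisely devoted to the regime where the density of the negative phase is small, and the one-phase density estimate of \cite{DP2} requires a sign condition. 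By asserting this bound you are in effect claiming that the paper's Case~2 (free boundary present but small capacity of $\{u_j^+=0\}$) is vacuous, which is unjustified and is where the real work of the proof lies.

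The paper's proof handles exactly this situation by a topological case analysis on the connected components $K$ of $\{u_j^+=0\}$ meeting $Q_{\frac32 R}$: if $\partial K$ reaches $\partial Q_{2R}$, a projection argument gives $\mathcal H^1(\partial K\cap(Q_{2R}\setminus Q_{\frac32 R}))\ge R/8$ and the Hausdorff-content bound \eqref{lower-cap-H} (not the Lebesgue bound \eqref{lower-cap-L}) yields the capacity estimate; if instead $\partial K\Subset Q_{2R}$, a maximum principle argument forces $u_j\ge 0$ in $Q_{\frac32 R}$, reducing matters to a genuine one-phase problem where Theorem~4.4 of \cite{DP2} does provide the measure density of the zero set, and only then does \eqref{lower-cap-L} apply. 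You would need to supply an argument of this kind (or an alternative) to close the capacity lower bound; without it the reverse H\"older inequality \eqref{pl:RH} is not established on all balls and Gehring's lemma cannot be applied. A secondary, fixable point: since $q_j\to 1^+$, it is cleaner to fix a single exponent $\ell\in(p_0/2,2)$ independent of $j$ (as the paper does) and pass from the $q_j$-norm to the $\ell$-norm by H\"older, so that the hypotheses and constants of Proposition~\ref{prop-Geh} are manifestly uniform in $j$.
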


\begin{proof}
We first claim that there exists
a universal constant~$\bar{C}>0$ such that, for any square~$Q_{2R}\subset B_3$
(with $R>1$), it holds
\begin{equation}\label{bb55}
\left(\fint_{Q_R} |\nabla u^{\pm}_j|^2\right)^{1/2}\le \bar{C}\left(\fint_{Q_{2R}} |\nabla u^{\pm}_j|^{\ell}\right)^{1/\ell},
\end{equation}
for any fixed $\ell$ satisfying $p_0/2<\ell<2$
(recall that~$p_0$ is the constant in~\eqref{p cond}).
However, one may also  take $\ell:=\frac32$, since here 
it is only important to have $\ell\in(1, 2)$, i.e. the lower order 
norm controls the higher order norm.

We show~\eqref{bb55} only for~$u^+_j$, since the proof for~$u^-_j$
is analogous. We denote by
\begin{equation}\label{ellej}
\ell_j:=\frac{2p_j}{2+p_j},
\end{equation}
that is~$p_j$
is the Sobolev exponent corresponding to~$\ell_j$.
Notice that~$1<\ell_j<p_0/2$, therefore, if~$\ell>p_0/2$ then~$\ell>\ell_j$.
Also,~$\ell_j\to 1$ as~$j\to+\infty$.

So we fix~$\ell$ independent of~$j$ such that~$p_0/2<\ell<2$ and
consider three possibilities:
\begin{itemize}
\item[\bf Case 1)] $Q_{2R}\cap\partial\{u_j^+=0\}\neq\emptyset$ and~$\ca_{\ell}(\{u^+_j=0\},Q_{2R})\ge\delta R^{2-\ell}$, for any~$j$, for some~$\delta>0$ independent of~$j$,
\item[\bf Case 2)] $Q_{2R}\cap\partial\{u_j^+=0\}\neq\emptyset$
but the capacity~$\ca_{\ell}(\{u^+_j=0\},Q_{2R})$ is small,
\item[\bf Case 3)] $Q_{2R}\cap\partial\{u_j^+=0\}=\emptyset$.
\end{itemize}


\medskip

{\bf Case~1):} We use the fact that~$u^+_j$ is $p$-subharmonic in~$B_{3R}$
(recall P.1 in Proposition~\ref{prop:tec}) to deduce
that, for any~$\psi\in C^1_0(B_{3R})$, with $\psi\ge0$, we have
\begin{equation}\label{bb56}
\int_{B_{3R}}|\nabla u^+_j|^{p_j-2}\nabla u^+_j\cdot\nabla\psi\le 0.
\end{equation}
Now we take a cutoff function~$\eta\in C^\infty(B_{3R})$ such that~$\eta\ge0$,
$\eta\equiv 1$ in~$Q_R$, $\eta\equiv 0$ outside~$Q_{2R}$ and~$|\nabla\eta|\le\frac{C}{R}$ for some~$C>0$.
Then, we choose~$\psi:=u^+_j\,\eta^{p_j}$ in~\eqref{bb56} and we obtain that
$$ \int_{B_{3R}}|\nabla u^+_j|^{p_j}\eta^{p_j} + p_j\,\int_{B_{3R}}|\nabla u^+_j|^{p_j-2} u_j^+\,\eta^{p_j-1} \nabla u^+_j\cdot\nabla\eta\le 0.$$
After applying H\"older's inequality, this yields
$$ \int_{B_{3R}}|\nabla u^+_j|^{p_j}\eta^{p_j} \le p_j^{p_j}\, \int_{B_{3R}}|u^+_j|^{p_j}|\nabla\eta|^{p_j}.$$
Therefore, recalling the properties of~$\eta$, we have
$$ \int_{Q_R}|\nabla u^+_j|^{p_j}\le \frac{C^{p_j}\,p_j^{p_j}}{R^{p_j}}\,\int_{Q_{2R}} |u^+_j|^{p_j},$$
which implies that
\begin{equation}\label{bb600}
\frac{R^{p_j}}{C^{p_j}\,p_j^{p_j}}\,\fint_{Q_R}|\nabla u^+_j|^{p_j}\le 2^2\, \fint_{Q_{2R}} |u^+_j|^{p_j}.
\end{equation}

Rescaling $u^+_j$ and setting
\begin{equation}\label{bb57}
v_j^+(x):=u^+_j(Rx),
\end{equation}
we observe that~$p_j$
is the Sobolev exponent corresponding to~$\ell_j$, see \eqref{ellej},
hence the Sobolev embedding gives that
\begin{equation}\label{sob}
\left(\int_{Q_2}|v_j^+|^{p_j}\right)^{1/p_j}\le C \left(\int_{Q_2}|v^+_j|^{\ell_j}+|\nabla v^+_j|^{\ell_j}\right)^{1/\ell_j}\le
C \left(\int_{Q_2}|v^+_j|^{\ell}+|\nabla v^+_j|^{\ell}\right)^{1/\ell},
\end{equation}
for some~$C>0$ (recall that~$\ell>\ell_j$). Furthermore, using the 
scaling properties of the $\ell-$capacity and applying the Poincar\'e inequality \eqref{Poinc}, we get
\begin{equation}\label{poin}
\left(\int_{Q_2}|v_j^+|^{\ell}\right)^{1/\ell}\le \left(\frac{c}{\ca_{\ell}(\{v_j^+=0\},Q_2)}\right)^{1/\ell}\left(\int_{Q_2}|\nabla v^+_j|^{\ell}\right)^{1/\ell}
\le c_0\left(\int_{Q_2}|\nabla v^+_j|^{\ell}\right)^{1/\ell},
\end{equation}
where~$c_0$ is a positive constant independent of~$j$.

Now, putting together~\eqref{sob} and~\eqref{poin}, we obtain that
\begin{equation}\label{bb58}
\left(\int_{Q_2}|v^+_j|^{p_j}\right)^{1/p_j}\le C(1+c_{0}) \left(\int_{Q_2}|\nabla v^+_j|^{\ell}\right)^{1/\ell}.
\end{equation}

Now we observe that, by~\eqref{bb57} and by making the change of variable~$y=Rx$, we have that
\begin{equation}\begin{split}\label{bb59}
&\int_{Q_2}|v^+_j(x)|^{p_j}\,dx =\int_{Q_2}|u^+_j(Rx)|^{p_j}\,dx \\
&\qquad\qquad = R^{-2}\int_{Q_{2R}}|u^+_j(y)|^{p_j}\,dy = 2^2\,\fint_{Q_{2R}}|u^+_j(y)|^{p_j}\,dy.
\end{split}\end{equation}
Moreover, from~\eqref{bb57} we deduce that
$$ \nabla v^+_j(x)=R\,\nabla u^+_j(Rx),$$
which implies
\begin{equation}\begin{split}\label{bb60}
&\int_{Q_2}|\nabla v^+_j(x)|^{\ell}\,dx = R^{\ell}\,\int_{Q_2}|\nabla u^+_j(Rx)|^{\ell}\,dx \\
&\qquad\qquad = R^{\ell-2}\,\int_{Q_{2R}}|\nabla u^+_j(y)|^{\ell}\, dy
= 2^2\,R^{\ell}\,\fint_{Q_{2R}}|\nabla u^+_j(y)|^{\ell}.
\end{split}\end{equation}
Using~\eqref{bb59} and~\eqref{bb60} into~\eqref{bb58}, we get
$$ 2^{2/p_j}\, \left(\fint_{Q_{2R}}|u^+_j|^{p_j}\right)^{1/p_j}\le C(1+c_{0})\, 2^{2/\ell}\,R\,\left(\fint_{Q_{2R}}|\nabla u^+_j|^{\ell}\right)^{1/\ell}.$$
From this and~\eqref{bb600}, we obtain
\begin{eqnarray*}
\frac{R}{C\,p_j}\, \left(\fint_{Q_R}|\nabla u^+_j|^{p_j}\right)^{1/p_j} &\le &
2^{2/p_j}\, \left(\fint_{Q_{2R}}|u^+_j|^{p_j}\right)^{1/p_j}\\
&\le &  C(1+c_{0})\, 2^{2/\ell}\,R\,\left(\fint_{Q_{2R}}|\nabla u^+_j|^{\ell}\right)^{1/\ell},
\end{eqnarray*}
or equivalently 
$$ \left(\fint_{Q_R}|\nabla u^+_j|^{p_j}\right)^{1/p_j} \le C\, \left(\fint_{Q_{2R}}|\nabla u^+_j|^{\ell}\right)^{1/\ell}, $$
up to renaming constants.

Now, since~$p_j>2$, for any fixed $\ell$ such that $p_0/2<\ell<2$ we have
\begin{equation*}
\left(\fint_{Q_R}|\nabla u^+_j|^{2}\right)^{1/2} \le C\,
\left(\fint_{Q_R}|\nabla u^+_j|^{p_j}\right)^{1/p_j}
\le C \, \left(\fint_{Q_{2R}}|\nabla u^+_j|^{\ell}\right)^{1/\ell},
\end{equation*}
which establishes~\eqref{bb55} in the Case~1).

\medskip

{\bf Case~2):} Suppose that $\ca_{\ell}(\{u^+_j=0\},Q_{2R})<\delta R^{2-\ell}$.
We take the square~$Q_{\frac{3}{2}R}$
and we consider two subcases:
\begin{itemize}
\item[\bf 2a)] $Q_{\frac{3}{2}R}\cap\{u^+_j=0\}=\emptyset$,
\item[\bf 2b)] $Q_{\frac{3}{2}R}\cap\{u^+_j=0\}\neq\emptyset$.
\end{itemize}

In {\bf Case~2a)}, thanks to P.1 in Proposition~\ref{prop:tec} we have that~$u^+_j$
is~$p_j$-harmonic in~$Q_{\frac{3}{2}R}$, and so
\begin{equation}\label{ghfeuiwgiog}
\int_{Q_{\frac{3}{2}R}}|\nabla u^+_j|^{p_j-2}\nabla u^+_j\cdot\nabla\psi=0,
\end{equation}
for any~$\psi\in C^1_0(Q_{\frac{3}{2}R})$.
Now we take a cutoff function~$\eta\in C^\infty(B_3)$ such that~$\eta\ge0$,
$\eta\equiv 1$ in~$Q_R$, $\eta\equiv 0$ outside~$Q_{\frac{3}{2}R}$
and~$|\nabla\eta|\le C/R$ for some positive~$C$.
We also set
$$ \overline{u}^+_j:=\left(\frac32 R\right)^{-2}\int_{Q_{\frac{3}{2}R}}u^+_j(x)\,dx.$$
Therefore, taking~$\psi:=(u^+_j-\overline{u}^+_j)\eta$ in~\eqref{ghfeuiwgiog},
we obtain that
$$ \int_{Q_{\frac{3}{2}R}}|\nabla u^+_j|^{p_j}+
p_j\int_{Q_{\frac{3}{2}R}}|\nabla u^+_j|^{p_j-2}(u^+_j-\overline{u}^+_j)\eta^{p_j-1}
\nabla u^+_j\cdot\nabla\eta =0.$$
So, by H\"older's inequality,
$$ \int_{Q_{\frac{3}{2}R}}|\nabla u^+_j|^{p_j}\eta^{p_j}\le p_j^{p_j}
\int_{Q_{\frac{3}{2}R}}|u^+_j-\overline{u}^+_j|^{p_j}|\nabla\eta|^{p_j},$$
which implies that
$$ \int_{Q_R}|\nabla u^+_j|^{p_j}\le \frac{C^{p_j}\,p_j^{p_j}}{R^{p_j}}
\int_{Q_{\frac{3}{2}R}}|u^+_j-\overline{u}^+_j|^{p_j},$$
thanks to the properties of~$\eta$. Thus
\begin{equation}\label{qwevbdgrvn}
\frac{R^{p_j}}{C^{p_j}\,p_j^{p_j}} \fint_{Q_R}|\nabla u^+_j|^{p_j}\le \left(\frac{3}{2}\right)^2
\fint_{Q_{\frac{3}{2}R}}|u^+_j-\overline{u}^+_j|^{p_j}.
\end{equation}

Now we rescale~$u^+_j$ in the following way: we set~$v^+_j(x):=u^+_j(Rx)$
and~$\overline{v}^+_j:=\frac49\int_{Q_{\frac{3}{2}}}v^+_j(x)\,dx$. Notice that
\begin{equation}\label{opierhtj}
\overline{v}^+_j=\left(\frac{2}{3}\right)^2\int_{Q_{\frac{3}{2}}}u^+_j(Rx)\,dx=
\left(\frac{2}{3R}\right)^2\int_{Q_{\frac{3}{2}R}}u^+_j(y)\,dy=\overline{u}^+_j.
\end{equation}
From the Sobolev embedding and the Poincar\'e's inequality we get
\begin{equation}\begin{split}\label{ouebsbcvsdg}
\left(\int_{Q_{\frac{3}{2}}}|v^+_j-\overline{v}^+_j|^{p_j}\right)^{1/p_j}
 \le &\, C\,\left(\int_{Q_{\frac{3}{2}}}|v^+_j-\overline{v}^+_j|^{\ell_j}
+|\nabla v^+_j|^{\ell_j}\right)^{1/\ell_j}\\
 \le &\, C\,\left(\int_{Q_{\frac{3}{2}}}|v^+_j-\overline{v}^+_j|^{\ell}
+|\nabla v^+_j|^{\ell}\right)^{1/\ell}\\
\le &\, C\,\left(\int_{Q_{\frac{3}{2}}}|\nabla v^+_j|^{\ell}\right)^{1/\ell},
\end{split}\end{equation}
where~$\ell_j$ is given by~\eqref{ellej}, $p_0/2<\ell<2$, and the constant~$C>0$
may vary from line to line but it is independent on~$j$ (recall~\eqref{delta cond}).

Using the change of variable~$y=Rx$ and~\eqref{opierhtj}, we have that
\begin{eqnarray*}
&&\int_{Q_{\frac{3}{2}}}|v^+_j(x)-\overline{v}^+_j|^{p_j}\,dx =
\int_{Q_{\frac{3}{2}}}|u^+_j(Rx)-\overline{v}^+_j|^{p_j}\,dx\\
&&\qquad = R^{-2}\int_{Q_{\frac{3}{2}R}}|u^+_j(y)-\overline{u}^+_j|^{p_j}\,dy
= \left(\frac{3}{2}\right)^2\omega_2 \fint_{Q_{\frac{3}{2}R}}|u^+_j(y)-\overline{u}^+_j|^{p_j}\,dy.
\end{eqnarray*}
Similarly, one can check that
$$ \int_{Q_{\frac{3}{2}}}|\nabla v^+_j|^{\ell}=\left(\frac{3}{2}\right)^2 R^{\ell}\omega_2\,
\fint_{Q_{\frac{3}{2}R}}|\nabla u^+_j|^{\ell}.$$
Inserting the last two formulas into~\eqref{ouebsbcvsdg} we obtain that
$$ \left(\frac{3}{2}\right)^{2/p_j}
\left(\fint_{Q_{\frac{3}{2}R}}|u^+_j-\overline{u}^+_j|^{p_j}\right)^{1/p_j}
\le C\left(\frac32\right)^{2/\ell}R\left(\fint_{Q_{\frac{3}{2}R}}|\nabla u^+_j|^{\ell}\right)^{1/\ell},$$
which, together with~\eqref{qwevbdgrvn}, implies that
$$ \left(\fint_{Q_R}|\nabla u^+_j|^{p_j}\right)^{1/p_j}\le C\,p_j\left(\frac32\right)^{2/\ell}\left(\fint_{Q_{\frac{3}{2}R}}|\nabla u^+_j|^{\ell}\right)^{1/\ell} \le C\,\left(\fint_{Q_{\frac{3}{2}R}}|\nabla u^+_j|^{\ell}\right)^{1/\ell},  $$
up to renaming~$C$. Notice that~$C$ is independent on~$j$,
thanks to~\eqref{p cond} and the fact that~$\ell<2$.
Thus
$$ \left(\fint_{Q_R}|\nabla u^+_j|^{p_j}\right)^{1/p_j}\le
C\,\left(\fint_{Q_{2R}}|\nabla u^+_j|^{\ell}\right)^{1/\ell}.$$
This, together with the fact that~$p_j>2$, implies~\eqref{bb55}
for any~$p_0/2<\ell<2$. This finishes Case 2a).

\medskip

Now we suppose that {\bf Case~2b)} holds true.
Since the $\ell$-capacity of~$\{u^+_j=0\}$ in~$Q_{2R}$ is small relative to $R^{2-\ell}$,
it cannot happen that~$Q_{\frac{3}{2}R}\subseteq \{u^+_j=0\}$,
otherwise we would have a uniform bound from below for the capacity
(see e.g.~\cite{Frehse}).
Therefore, there exists a point~$q\in\partial\{u^+_j>0\}\cap Q_{\frac{3}{2}R}$.
Let~$\Gamma^+_j:=\partial\{u^+_j>0\}$ and~$K\subseteq\{u^+_j=0\}$ be the component
of~$\{u^+_j=0\}$ such that~$q\in\partial K$.

Suppose first that $K$ is the unique component of $\{u^+_j=0\}$ 
such that $K\cap Q_{\frac{3}{2}R}\neq\emptyset$.
Since~$u$ is a minimizer, then it is log-Lipschitz continuous, see Proposition P.3 in 
\ref{prop:tec},
therefore~$u_j$ is continuous.
Hence,
\begin{itemize}
\item[\bf $2b_1$)] either~$\partial K\cap\partial Q_{2R}\neq\emptyset$, see Figure 1(A),
\item[\bf $2b_2$)] or~$\partial K\Subset Q_{2R}$, see Figure~1(B).
\end{itemize}

\begin{figure}
        \centering
                \begin{subfigure}[b]{0.48\textwidth}
                                \includegraphics[width=\textwidth]{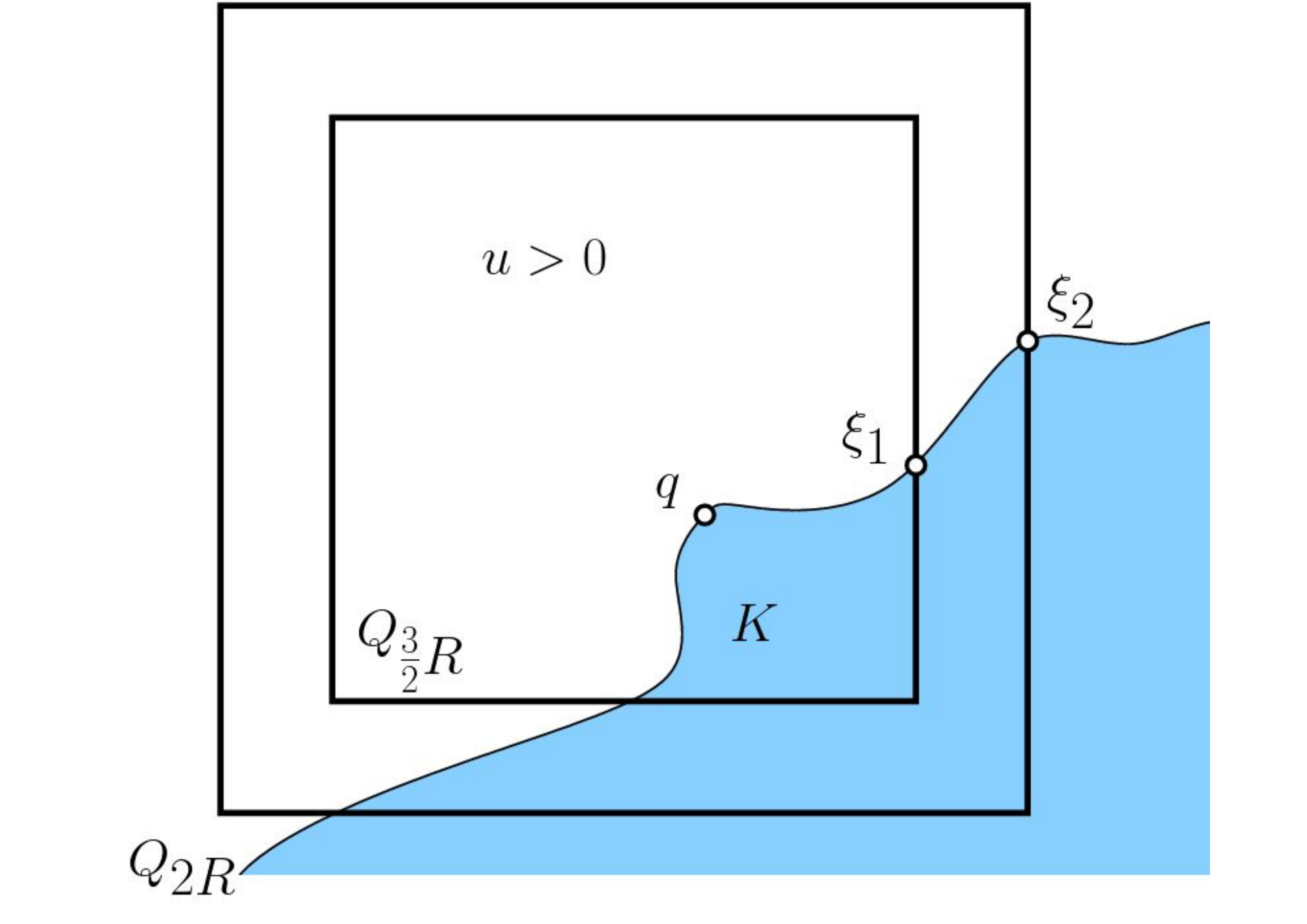}
                \caption{Case $2b_1$}
                       \end{subfigure}
                \begin{subfigure}[b]{0.48\textwidth}
                \includegraphics[width=\textwidth]{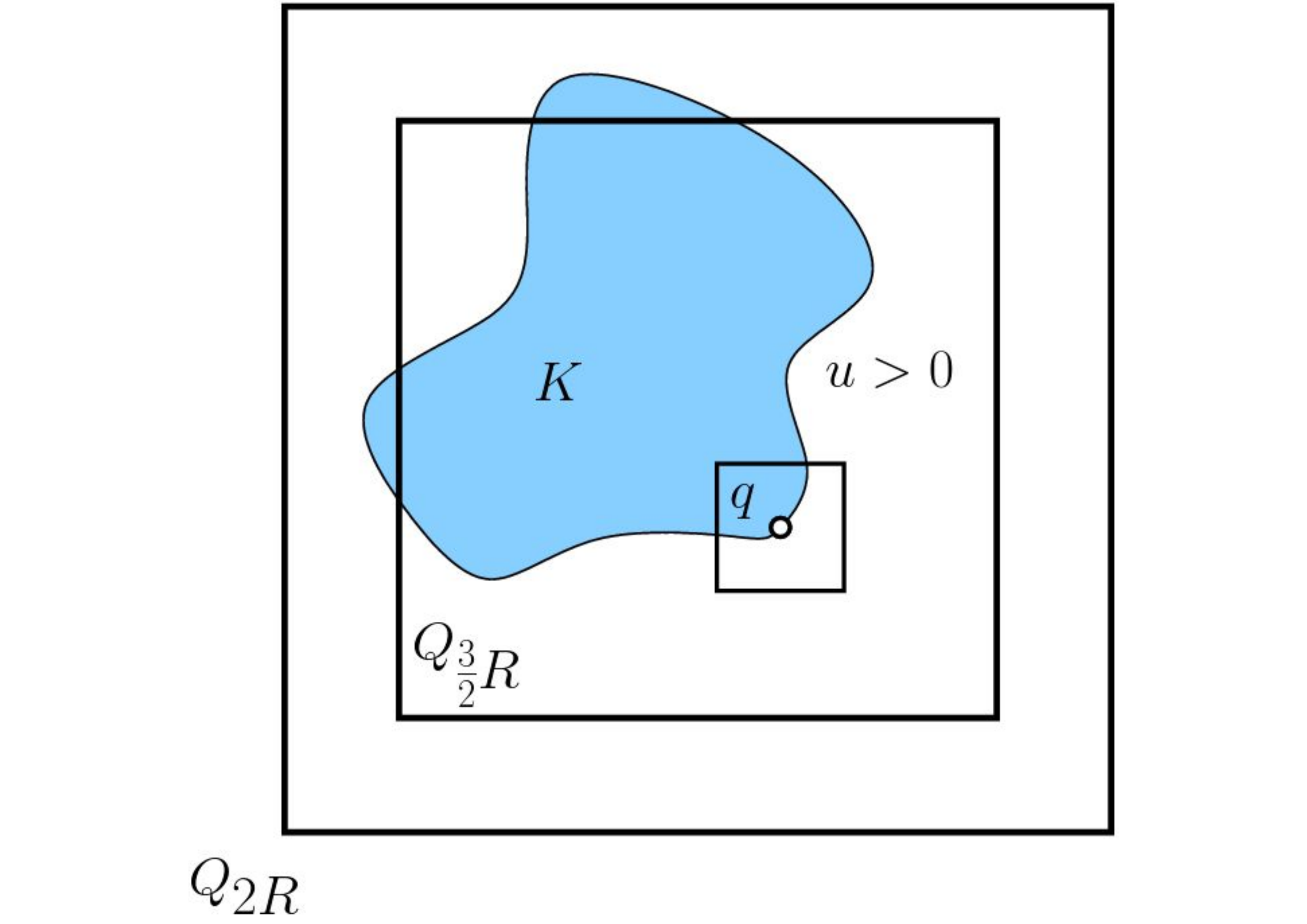}
                \caption{Case $2b_2$}
        \end{subfigure}
                \caption{The two sub-cases of Case 2b}
                \end{figure}

\medskip

In {\bf Case~$2b_1$)}, that is when $\partial K\cap\partial Q_{2R}\neq\emptyset$, we have
that
\begin{equation}\label{per big}
\mathcal H^1(\partial \{u^+_j=0\}\cap (Q_{2R}\setminus Q_{\frac32 R}) )\ge\frac{R}{8},
\end{equation}
since~$u_j$ is a continuous functions.
Indeed, let $\xi_1$ and $\xi_2$ be the intersection points of $\partial K$ with
$\partial Q_{\frac32 R}$ and $\partial Q_{2R}$, respectively, and
let $K_0$ be the orthogonal projection of $\tau:=\partial K\cap (Q_{2R}\setminus Q_{\frac32 R})$
on the line joining $\xi_1$ and $\xi_2$.  We consider a
covering of $\tau$, namely $\tau\subset\cup_{i\in I} B_{i}(x_i)$,
such that~$\hbox{diam} B_i(x_i)<\epsilon$
for every $i\in I$.
Hence, denoting by $\bar{x}_i$, $i\in I$,
the projection of $x_i$ on the line that joins $\xi_1$ and $\xi_2$,
we find a covering for $K_0$,
that is~$K_0\subset\cup_{i\in I} B_i(\bar{x}_i)$,
with $\hbox{diam} B_i(\bar{x}_i)<\epsilon$.
Consequently,
$$ \mathcal H^1_\epsilon(\tau)\ge \mathcal H^1_\epsilon(K_0)=\inf \sum_{i\in I} \hbox{diam} B_i(\bar{x}_i)
\ge \frac{R}{8},$$
where the infimum is taken over all the coverings of~$K_0$ such
that~$\hbox{diam} B_i(\bar{x}_i)<\epsilon$.
Hence, sending~$\epsilon$ to zero we obtain~\eqref{per big}.

We notice that~\eqref{per big} gives a lower bound of the capacity, thanks to \eqref{lower-cap-H},
and so we conclude as in Case~1).

In {\bf Case~$2b_2$)}, that is when $\partial K\Subset Q_{2R}$,
we recall Subsection \ref{rem:one phase} in order
to conclude that the free boundary is given just by $\partial\{u>0\}$.

That said, we observe that if~$u_j\le 0$ inside~$K$ and~$u_j\le0$ outside,
then actually~$u_j\equiv0$ inside~$K$,
since~$u_j=0$ on~$\partial K$ and it is~$p_j$-subharmonic inside.
Thus
\begin{equation}\label{positivity}
u_j\ge 0 \quad {\mbox{ in }}Q_{\frac{3}{2}R}.
\end{equation}
Thus, we can consider the pure one-phase minimization problem~\eqref{erdfgvxb}
in $Q_{\frac{3}{2}R}$ (recall that~$\Lambda>0$).

Now, if~$Q_{\frac{3}{4}R}$ is contained in the set~$K$,
then we have a uniform lower bound for the capacity,
and so we conclude as in Case~1).

Hence we suppose that~$Q_{\frac{3}{4}R}$ is
not contained in~$K$, and we take a small square centered at~$q$,
say~$Q_{\frac{R}{8}}(q)$, such
that~$Q_{\frac{R}{8}}(q)\subset Q_{\frac{3}{4}R}$
(see Figure~1(B)).

Now, recalling~\eqref{positivity}, we have that we can deal with
a one-phase problem in the square~$Q_{\frac{R}{8}}(q)$.
Hence, from Theorem~4.4 in~\cite{DP2} we obtain that
$$ |\{u_j^+=0\}\cap Q_{\frac{3}{8}R}(q)|\ge c_0 R^2,$$
for some universal constant~$c_0>0$, see Figure~1(B).
Again this implies a lower bound for the capacity, thanks to \eqref{lower-cap-L},
and so we conclude as in Case~1).

Suppose now that there is another component $K_2\subset\{u^+_j=0\}$
such that~$K_2\cap Q_{\frac{3}{2}R}\neq\emptyset$ (that is~$u_j$ may change sign). 
Then, as before, either $\partial K_2\cap\partial Q_{2R}\neq\emptyset$ 
or $\partial K\Subset Q_{2R}$. 
In the first case, we obtain a lower bound for the capacity
reasoning as in Case $2b_1)$. In the second case, we use again the maximum principle 
to reduce ourselves to a one-phase minimization problem 
and, from the density estimate for the zero set, we get a lower bound for the capacity. 

\medskip

{\bf Case~3):}  Finally we deal with the last case,
which is the easiest one. In fact the
proof follows as in {\bf Case 2a)} if we replace there $Q_{\frac{3R}2}$ with $Q_{2R}$.

Thus, since $p_j>2$, for any~$p_0/2<\ell<2$ we obtain
the claim in~\eqref{bb55} also for squares that do not touch~$\partial\{u_j^+=0\}$.

Combining all the cases treated above, we can see that for any square
$Q_{2R}\subset B_3$ and some fixed $\ell$ with~$p_0/2<\ell<2$
there exists a tame constant $C>0$ such that there holds
$$ \left(\fint_{Q_R}|\nabla u^+_j|^{2}\right)^{1/2}\le
C\,\left(\fint_{Q_{2R}}|\nabla u^+_j|^{\ell}\right)^{1/\ell}.$$
Therefore we can apply the Gehring's Lemma (see Proposition \ref{prop-Geh}, and for instance \cite{Giaq} for the proof) and we get
that there exists~$q>2$ such that
$$ \| \nabla u^+_j\|_{L^q(Q_R)}\le C, $$
for a suitable~$C>0$.
By a covering argument, this implies the desired result.
\end{proof}

From the uniform estimates in $W^{1,q}_{loc}(B_3)$, with $q>2$,
and the Sobolev's embedding Theorem we immediately get the following:

\begin{corollary}\label{cor-contn}
The functions $u_j^\pm$ are uniformly continuous in $B_2$.
\end{corollary}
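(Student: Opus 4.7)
The plan is short and standard: combine the uniform $W^{1,q}$ estimate of Lemma~\ref{lem-Gehring} with the Morrey embedding, which in the planar setting $N=2$ is particularly favourable because every exponent $q>2$ is supercritical with respect to the dimension.

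First, I would observe that although Lemma~\ref{lem-Gehring} is phrased on $B_1$, its proof is genuinely local: the Caccioppoli-plus-Gehring reverse H\"older analysis applies on any square $Q_{2R}\Subset B_3$, with constants depending only on the distance to $\partial B_3$, on $p_0$, and on $\ell$. Covering $\overline{B_2}$ by finitely many such squares and applying the (translated) statement of Lemma~\ref{lem-Gehring} to each, I obtain a uniform bound
$$\|\nabla u_j^\pm\|_{L^q(B_2)}\le C$$
for some fixed $q>2$ and a constant $C>0$ independent of $j$. This is what is meant by the uniform estimate in $W^{1,q}_{loc}(B_3)$ invoked in the paragraph above.

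Next, I would invoke Morrey's embedding: since $N=2<q$, one has $W^{1,q}(B_2)\hookrightarrow C^{0,\alpha}(\overline{B_2})$ with exponent $\alpha:=1-\tfrac{2}{q}\in(0,1)$, and the embedding constant depends only on $q$ and the geometry of $B_2$. Applied to the difference $u_j^\pm(x)-u_j^\pm(0)=u_j^\pm(x)$, where $u_j^\pm(0)=0$ because $x_j\in\Gamma$ and of the normalisation~\eqref{uj}, this yields
$$|u_j^\pm(x)-u_j^\pm(y)|\le C\,|x-y|^\alpha\qquad\text{for all }x,y\in\overline{B_2},$$
with $C$ independent of $j$. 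In particular the family $\{u_j^\pm\}$ is equi-H\"older on $\overline{B_2}$, hence uniformly continuous (and precompact in $C^0(\overline{B_2})$, which is what will be used later to pass to the limit in the heuristic argument of Step~0).

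There is no substantive obstacle here; the real work has already been done in Lemma~\ref{lem-Gehring}. The only bookkeeping point is extending the $L^q$-gradient bound from $B_1$ to $B_2$, which is immediate from the locality of the Caccioppoli and Gehring estimates and a standard covering. I also note that the $L^\infty$-bound on $u_j^\pm$ itself is not needed separately: it comes for free from the H\"older estimate together with the vanishing at the origin.
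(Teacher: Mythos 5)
Your argument is exactly the paper's: the uniform local $W^{1,q}$ bound with $q>2$ from Lemma~\ref{lem-Gehring} (extended from $B_1$ to $B_2$ by the same covering of squares $Q_{2R}\Subset B_3$ that the lemma's proof already uses), followed by the Morrey--Sobolev embedding in dimension two, giving equi-H\"older and hence uniform continuity. The added remarks on the normalisation $u_j^\pm(0)=0$ and precompactness in $C^0$ are correct and consistent with how the corollary is used later.
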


\medskip

\subsection{Step 3: Linearity in $B_3\setminus B_1$}\label{sec:step3}
Thanks to Lemma~\ref{lem-Gehring} and a standard compactness argument,
we conclude that
\begin{equation}\label{starstar}
{\mbox{ $\nabla u^{\pm}_j$ converges strongly in~$L^{q'}(B_1)$,
for any~$q'<q$, with $q>2$, to some~$\nabla u_0^{\pm}$.}}
\end{equation}
Moreover, Lemma~\ref{lem-nondeg} implies that
both~$u^+_0$ and~$u^-_j$ are non-degenerate.
Therefore, since~$p_j\rightarrow 2$ as~$j\to+\infty$,
from~\eqref{pr-probe} we deduce that
\begin{eqnarray}\label{bb90}
\liminf_{j\to \infty} \varphi_{p_j}(1,u_j,0) &\ge & \liminf_{j\to \infty}
\varphi_{p_j}(3,u_j,0)\\\nonumber
&=& \liminf_{j\to \infty}3^{-4}\int_{B_3}|\nabla u^{+}_j|^{p_j}\,\int_{B_3}|\nabla u^{-}_j|^{p_j} \\\nonumber
&\ge &3^{-4}\int_{B_3}|\nabla u^{+}_0|^{2}\,\int_{B_3}|\nabla u^{-}_0|^{2}
\equiv \varphi_2(3, u_0, 0),
\end{eqnarray}
where the last line follows from the semicontinuity of the Dirichlet's integral.

On the other hand, \eqref{starstar} implies strong convergence
of the gradient in~$L^2(B_1)$, since~$q>2$ in Lemma~\ref{lem-Gehring}.
Therefore
$$ \liminf_{j\to \infty} \varphi_{p_j}(1,u_j,0) =\liminf_{j\rightarrow+\infty}\int_{B_1}|\nabla u^+_j|^{p_j}\,\int_{B_1}|\nabla u^-_j|^{p_j}=
\int_{B_1}|\nabla u^+_0|^2\,\int_{B_1}|\nabla u^-_0|^2 .$$
Hence,
\begin{equation}\label{bb91}
\varphi_2(3,u_0,0)\le  \int_{B_1}|\nabla u^+_0|^2\,\int_{B_1}|\nabla u^-_0|^2 =\varphi_2(1, u_0, 0).
\end{equation}

Now, we observe that~$u_0^+$ and $u_0^-$
are non-negative subharmonic functions
with disjoint supports fulfilling the conditions
of Theorem~\ref{Luis-mon},
and so the monotonicity of~$\varphi_2$ implies that
$$ \varphi_2(1,u_0,0)\le\varphi_2(3,u_0,0).$$
This and~\eqref{bb91} give that~$\varphi_2$ is constant in~$B_3\setminus B_1$.
Thus, Theorem \ref{Luis-mon} yields that
$u^+_0$ and $u^-_0$ must be linear in $B_3\setminus B_1$, say, 
$u_0^+=\alpha x_1^+ $ and $u_0^-=\beta x_1^-$, for some $\alpha$ and $\beta>0$.

\medskip


\subsection{Step 4: Filling in the gap}\label{sec:anisotropic}

In this subsection we want to show that~$u^+_0$
and~$u^-_0$ are linear in~$B_3$, and this will give a contradiction
with~\eqref{non-flat-0}. For this, we will prove that either $u_0^+$ in $\{u_0>0\}$ or $u_0^-$ in $\{u_0<0\}$ is harmonic, in order
to employ some unique continuation result.

Let us show that
\begin{equation}\label{pharm}
{\mbox{$u_0^+$ is harmonic in~$\{u_0>0\}$}}\end{equation}
(the proof for~$u_0^-$ is analogous).
We take a point~$x_0\in\Omega$ such that~$u_0(x_0)>0$,
then, thanks to the uniform convergence of~$u_j$ to~$u_0$,
we have that~$u_j(x_0)>0$ for~$j$ large enough. Therefore,
Corollary~\ref{cor-contn}
implies that there exists a small~$\delta=\delta(x_0)>0$ 
such that~$u_j>0$ in~$B_\delta(x_0)$, and so we can use P.1 in Proposition~\ref{prop:tec} 
to obtain that
$$ \Delta_{p_j}u_j=0 \quad {\mbox{ in }} B_\delta(x_0).$$
Therefore, for any~$\psi\in C^\infty_0(B_\delta(x_0))$, we have that
$$ \int_{B_\delta(x_0)}|\nabla u_j|^{p_j}\le
\int_{B_\delta(x_0)}|\nabla u_j+\nabla\psi|^{p_j}.$$
Taking the limit as~$j\to+\infty$ we have that
\begin{equation}\label{pharm-1}
\int_{B_\delta(x_0)}|\nabla u_0|^{2}\le
\int_{B_\delta(x_0)}|\nabla u_0+\nabla\psi|^{2}, \quad {\mbox{ for any }}
\psi\in C^\infty_0(B_\delta(x_0))
\end{equation}
(recall that~$\phi$ is fixed and that we have strong convergence
of~$\nabla u_j$ to~$\nabla u_0$ in~$L^2_{loc}(B_3)$).
By a density argument, from~\eqref{pharm-1} we get
$$ \int_{B_\delta(x_0)}|\nabla u_0|^{2}\le
\int_{B_\delta(x_0)}|\nabla v|^{2}, \quad {\mbox{ for any }}
v\in W^{1,2}(B_\delta(x_0)) \ {\mbox{ s.t. }} v-u_0\in W^{1,2}(B_\delta(x_0)).$$
Thus we conclude that
$$ \Delta u_0=0  \quad {\mbox{ in }} B_\delta(x_0).$$
Since~$u_0$ is a continuous function, this implies~\eqref{pharm}.

From Step~3 and~\eqref{pharm}, and applying the Unique Continuation Theorem (see~\cite{Lin}), 
we obtain that
\begin{equation}\label{aldjk}
{\mbox{$u^+_0$ and~$u^-_0$ are linear in~$B_3$.}}\end{equation}
On the other hand, the uniform convergence of~$u_j$ to~$u_0$, as $j\to+\infty$, 
implies that~\eqref{non-flat-0} holds true,
and so the level sets of~$u_0$ are not flat in~$B_1$. 
Indeed, by the uniform convergence, for any $\epsilon >0$ there is $j_0$ such that $|Cx_1-u_j^+(x)|<\epsilon$
whenever $j>j_0$, where we assume that $u_0^+(x)=Cx_1$ for some 
constant $C>0$.  Since $\fb {u_j}$ is $h_0$ thick in $B_1$ it follows that 
there is $y_j\in \fb {u_j}\cap B_1$ such that $y_j=e_1h_0/2+t_je_2$, for some $t_j\in \R$, where 
$e_1$ is the unit direction of the $x_1$-axis and $e_2\perp e_1$. Then we have that 
$|C\frac{h_0}2-0|=|u_0^+(y_j)-u_j^+(y_j)|<\epsilon$ which is 
in contradiction
with~\eqref{aldjk}, and thus concludes the proof
of Theorem~\ref{TH2}.

\medskip

\section{Proof of Theorem \ref{TH2-prime}}\label{sec-th2-0-pr}

In this section we prove Theorem \ref{TH2-prime}. For this, we recall Corollary \ref{coro3.4} and 
we square \eqref{cod-ball}: we have 
\begin{equation}
\left(\frac1r\fint_{B_r(x_0)}u^+\right)^2+ \left(\frac1r\fint_{B_r(x_0)}u^-
\right)^2\le C^2+\frac2{r^2}\fint_{B_r(x_0)}u^+
\fint_{B_r(x_0)}u^-,
\end{equation}
where $C>0$ is the constant appearing in Corollary \ref{coro3.4}. 

Now we set $u_{r}^\pm(x):=u^\pm(rx)$. So from the H\"older inequality, 
the Poincar\'e inequality \eqref{Poinc} 
and \eqref{lower-cap-H}, we have that, for any $1<\ell<2$,
\begin{eqnarray*}
&&\fint_{B_r(x_0)}u^\pm(x)\,dx \leq \left(\fint_{B_r(x_0)}(u^\pm)^\ell(x)\,dx\right)^{\frac1\ell}
=\left(\fint_{B_1(x_0)}(u^\pm_r)^{\ell}(y)\,dy \right)^{\frac1\ell}\\
&&\qquad \leq C_1 \left(\fint_{B_1(x_0)}|\nabla u^\pm_r(y)|^{\ell}\,dy \right)^{\frac1\ell}
=  
C_1\left(r^\ell\fint_{B_r(x_0)}|\na u^\pm|^\ell\right)^{\frac1\ell}\\&&\qquad =
C_1\,r\left(\fint_{B_r(x_0)}|\na u^\pm|^\ell\right)^{\frac1\ell},\end{eqnarray*}
for some $C_1>0$. However, from 
H\"older's inequality we have for $p>2>\ell>1$
$$\left(\fint_{B_r(x_0)}|\na u^\pm|^\ell\right)^{\frac1\ell}\leq \left(\fint_{B_r(x_0)}|\na u^\pm|^p\right)^{\frac1p}.$$Therefore 
$$\left(\frac1r\fint_{B_r(x_0)}u^+\right)^2+ \left(\frac1r\fint_{B_r(x_0)}u^-\right)^2\le 
C^2+ C_2(\phi_p(r, u, x_0))^\frac1p,$$
for some $C_2>0$. 

Let now $r_k:=3^{-k_0-k}$, for any $k\in\mathbb N$, where $k_0$ is the 
smallest positive integer such that $3^{-k_0}<r_0$. If $3^{-m-1}\le r\le 3^{-m}$, 
for some $m\in \mathbb N$, then 
$$\phi_p(r, u, x_0)\le C_3 \phi_p(3^{-m}, u, x_0)\le C_3 \phi_p(3^{-k_0}, u, x_0)$$ 
for some $C_3>0$, 
implying that 
$$\left(\frac1r\fint_{B_r(x_0)}u^\pm\right)^2\le C^2+ C(\phi_p(3^{-k_0}, u, x_0))^\frac1p,$$
for suitable $C_4>0$. 
Hence, P.4 in Proposition \ref{prop:tec} and 
the weak maximum principle (see Corollary 3.10 in \cite{MZ}) 
imply the estimate $\sup_{B_r(x_0)}|u|\leq Cr$. This completes the proof of Theorem \ref{TH2-prime}.

%
%
\section{Viscosity solutions}\label{sec:visc}

In order to apply  the regularity theory for free boundary problems developed 
for the viscosity solutions in~\cite{LN1,LN2}
we shall observe that any weak $W^{1,p}$ minimizer is also viscosity solution
(see Definition~2.4 in~\cite{Luis} for the case $p=2$). For this, we denote
by~$\Omega^+(u):=\{u>0\}$ and~$\Omega^-(u):=\{u<0\}$.
Moreover,
$$G(u^+_\nu,u^-_\nu):=(u^+_\nu)^p-(u^-_\nu)^p-\frac\Lambda {p-1}$$
is the flux balance across the free boundary,
where~$u^+_\nu$ and~$u^-_\nu$ are the normal derivatives in the inward direction
to~$\Omega^+(u)$ and~$\Omega^-(u)$, respectively (recall that~$\Lambda=\lambda^p_+-\lambda^p_-$). 

We recall the definition of viscosity solutions for the case $p\neq 2$ (see Definition 4.1 in \cite{DK}).

\begin{defn}\label{def:visc}
Let~$\Omega$ a bounded domain in~$\R^2$ and let~$u$ be a continuous function in~$\Omega$. We say that~$u$ is a viscosity solution
in~$\Omega$ if
\begin{itemize}
\item[i)] $\Delta_p u=0$ in~$\Omega^+(u)$ and~$\Omega^-(u)$,
\item[ii)] along the free boundary~$\Gamma=\partial\{u>0\}\cup\partial\{u<0\}$, $u$ satisfies the free boundary condition,
in the sense that:
\begin{itemize}
\item[a)] if at~$x_0\in\Gamma$ there exists a ball~$B\subset\Omega^+(u)$
such that~$B\cap\Gamma=\{x_0\}$ and
\begin{equation}\label{visc1}
u^+(x)\ge\alpha\langle x-x_0,\nu\rangle^+ + o(|x-x_0|), \ {\mbox{ for }} x\in B,
\end{equation}
\begin{equation}\label{visc2}
u^-(x)\le\beta\langle x-x_0,\nu\rangle^- + o(|x-x_0|), \ {\mbox{ for }} x\in B^c,
\end{equation}
for some $\alpha>0$ and~$\beta\ge0$, with equality along every non-tangential domain,
then the free boundary condition is satisfied
$$ G(\alpha,\beta)\le0, $$
\item[b)] if at~$x_0\in\Gamma$ there exists a ball~$B\subset\Omega^-(u)$
such that~$B\cap\Gamma=\{x_0\}$ and
$$ u^-(x)\ge\beta\langle x-x_0,\nu\rangle^- + o(|x-x_0|), \ {\mbox{ for }} x\in B, $$
$$ u^+(x)\le\alpha\langle x-x_0,\nu\rangle^+ + o(|x-x_0|), \ {\mbox{ for }} x\in\partial B, $$
for some $\alpha\ge0$ and~$\beta>0$, with equality along every non-tangential domain,
then
$$ G(\alpha,\beta)\ge0. $$
\end{itemize}
\end{itemize}
\end{defn}

With this notion of viscosity solutions, in \cite{DK} we prove the following:

\begin{theorem}\label{TH:viscosity}
Let~$u\in W^{1,p}(\Omega)$ be a minimizer of~\eqref{Ju}.
Then, $u$ is also a viscosity solution in the sense of Definition~\ref{def:visc}.
\end{theorem}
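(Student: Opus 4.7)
The plan is to verify the two properties of Definition~\ref{def:visc} separately. Property (i) follows directly from P.1 in Proposition~\ref{prop:tec}, which says that a minimizer of $J$ is $p$-harmonic in each of $\Omega^+(u)$ and $\Omega^-(u)$. All the work will go into property (ii), the free boundary condition; I focus on case (a), since case (b) follows by the substitution $u \mapsto -u$ (with $\lambda_+$ and $\lambda_-$ swapped).

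For case (a), I will argue by contradiction. Suppose $G(\alpha,\beta) > 0$, i.e., $\alpha^p - \beta^p > \Lambda/(p-1)$. Normalize $x_0 = 0$ and $\nu = e_1$, so that the touching ball $B$ lies in $\{x_1 > 0\}$ and the asymptotics \eqref{visc1}--\eqref{visc2} force $\Gamma \cap B_r$ to lie in a layer of width $o(r)$ about $\{x_1 = 0\}$. For a small $\epsilon > 0$ (eventually tied to $r$), I construct a competitor $v \in u + W_0^{1,p}(B_r)$ by setting $v$ equal to the shifted two-plane profile $V_\epsilon(x) := \alpha(x_1 + \epsilon)^+ - \beta(x_1 + \epsilon)^-$ on the inner ball $B_{r/2}$ (so that the new free boundary is $\{x_1 = -\epsilon\}$, enlarging the positive phase), and on the shell $B_r \setminus B_{r/2}$ by $p$-harmonic extension in each of the two regions $\{x_1 > -\epsilon\}$, $\{x_1 < -\epsilon\}$ with Dirichlet data matching $V_\epsilon$ on $\partial B_{r/2}$ and $u$ on $\partial B_r$. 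The non-tangential equality in \eqref{visc1}--\eqref{visc2} ensures that the boundary data are compatible up to an $o(r)$ error, so $v$ is a genuine $W^{1,p}$ competitor.

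The heart of the argument is the energy expansion
\[
J(v) - J(u) = -\epsilon\,\omega_{N-1}\, r^{N-1}\,(p-1)\,G(\alpha,\beta) + o(\epsilon\, r^{N-1}).
\]
To derive it, the measure term is handled directly: the new positive-phase volume has measure $\epsilon\,\omega_{N-1}\,r^{N-1}(1 + o(1))$, contributing $\Lambda\,\epsilon\,\omega_{N-1}\,r^{N-1}(1 + o(1))$ to the energy difference. I split the gradient term into two parts: a direct change $(\alpha^p - \beta^p)\,\epsilon\,\omega_{N-1}\,r^{N-1}$ in the slab $\{-\epsilon < x_1 < 0\}$ (where the phase flips from negative to positive), plus a shell contribution $-p\,(\alpha^p - \beta^p)\,\epsilon\,\omega_{N-1}\,r^{N-1}$ coming from the slope correction of order $\epsilon/r$ needed to reconnect the shifted two-plane profile to the original boundary values (the one-dimensional model $v(x)=\alpha(1\mp 2\epsilon/r)(x\pm\epsilon)$ already exhibits the factor $p$ through $(1\pm 2\epsilon/r)^p$). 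Summing yields the displayed expansion; since $G(\alpha, \beta) > 0$ by assumption, the energy difference is strictly negative once $r$ is small enough and $\epsilon$ is a fixed fraction of $r$, contradicting the minimality of $u$.

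The main technical obstacle will be justifying the expansion with sharp constants, because the asymptotics \eqref{visc1}--\eqref{visc2} only provide one-sided bounds and the $p$-harmonic extensions on the shell are implicit. I handle this by a blow-up argument: rescaling $\tilde u(y) := u(ry)/r$, both $\tilde u$ and the rescaled competitor lie within $o(1)$ of the two-plane profile $U(y) := \alpha y_1^+ - \beta y_1^-$ on $B_1$, uniformly on compact subsets away from $\{y_1 = 0\}$. Since $\alpha > 0$, the profile $U$ is $p$-harmonic and non-degenerate in each phase, so linearising the $p$-Laplacian about $U$ reduces the shell analysis to a linear elliptic problem with explicit energy formulae, supplying the required constants. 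The edge case $\beta = 0$ is recovered by continuity, with the degenerate negative phase contributing only to lower-order terms.
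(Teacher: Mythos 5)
The paper does not actually prove this statement here: it refers the reader to Theorem~4.2 of \cite{DK}, so your attempt must be judged on its own terms. Your overall strategy --- a Hadamard-type domain perturbation that shifts the free boundary by $\epsilon$ into the negative phase, together with the first-order energy expansion $J(v)-J(u)=-\epsilon\,\omega_{N-1}r^{N-1}(p-1)G(\alpha,\beta)+o(\epsilon r^{N-1})$ --- is the right variational heuristic, and your one-dimensional computation of the constant $(p-1)(\alpha^p-\beta^p)$ is correct. Property (i) and the reduction of case (b) are fine (modulo the remark that $u\mapsto -u$ also flips the sign of $\Lambda$, so the reduction is not a literal symmetry of $J$).

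The gap is in the step where you claim that the asymptotics \eqref{visc1}--\eqref{visc2} ``force $\Gamma\cap B_r$ to lie in a layer of width $o(r)$ about $\{x_1=0\}$.'' Definition~\ref{def:visc} provides only (a) a single ball $B\subset\Omega^+(u)$ touching $\Gamma$ at the one point $x_0$, and (b) \emph{one-sided} bounds $u^+\ge\alpha\langle x-x_0,\nu\rangle^++o(|x-x_0|)$ in $B$ and $u^-\le\beta\langle x-x_0,\nu\rangle^-+o(|x-x_0|)$ in $B^c$, with equality only along non-tangential regions at $x_0$. None of this controls where $\partial\{u>0\}$ sits away from $x_0$ in a full ball $B_r(x_0)$: the positivity set may have spikes or extra components entering $\{x_1<0\}$ outside every non-tangential cone, and $u^-$ being bounded \emph{above} by $\beta x_1^-$ gives no lower bound on its energy. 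Consequently your competitor $v=V_\epsilon$ on $B_{r/2}$ modifies $u$ on sets where $u$ is not $o(r)$-close to the two-plane profile, the measure term and the gradient term each acquire uncontrolled contributions of order $r^N$ rather than $o(\epsilon r^{N-1})$, and the sharp expansion collapses. The same objection defeats the closing blow-up paragraph: the claim that $\tilde u(y)=u(ry)/r$ is within $o(1)$ of $U$ ``uniformly on compact subsets away from $\{y_1=0\}$'' is exactly what the non-tangential, one-sided hypotheses do \emph{not} give. To make this route work you would need either a preliminary flatness/clean-up step for $\Gamma$ near $x_0$, or (as is standard, and as is done in \cite{DK}) comparison functions adapted to the touching ball (radial $p$-harmonic barriers in an annulus) combined with a weak, domain-variation form of the free boundary condition, which exploits minimality without requiring two-sided closeness of $u$ to the two-plane profile on all of $B_r$.
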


See Theorem 4.2 in \cite{DK} for the proof of Theorem~\ref{TH:viscosity}.

We also recall the notion of $\e-$monotonicity of a viscosity solution to our free boundary problem.
\begin{defn}
We say that
$u$ is $\epsilon-$monotone
if there are a unit vector $e$ and an angle $\theta_0$ with 
$\theta_0 > \frac\pi 4$ (say) and $\epsilon >0$ (small)
such that, for every $\epsilon'\ge \epsilon $,
\begin{equation}\label{e-mon}
\sup_{B_{\epsilon' \sin\theta_0} (x)} u(y -\epsilon ' e) \le u(x).
\end{equation}
\end{defn}

We define~$\Gamma(\theta_0,e)$ the cone with axis~$e$ and opening~$\theta_0$. 

\begin{defn}
We say that $u$ is $\epsilon-$monotone in the 
cone~$\Gamma(\theta_0,\e)$ if it is~$\epsilon-$monotone 
in any direction~$\tau\in\Gamma(\theta_0,\epsilon)$. 
\end{defn}

One can interpret the $\e-$monotonicity of $u$ as closeness of 
the free boundary to a Lipschitz graph with Lipschitz constant sufficiently close to 
$1$ if we depart from  the free boundary in directions $e$ at distance $\e$ and higher. 
The exact value of the Lipschitz constant is given 
by~$\left(\tan\frac{\theta_0}{2}\right)^{-1}$. 
Then the ellipticity propagates to 
the free boundary via Harnack's inequality giving that $\Gamma$ is Lipschitz. 
Furthermore, Lipschitz free boundaries are, in fact, $C^{1, \alpha}$ regular.

\medskip 

For $p=2$ this theory was founded by L. Caffarelli, see~\cite{Caffa1, Caffa3, Caffa2}. 
Recently J. Lewis and K. Nystr\"om proved that this  theory is valid for all $p>1$, see \cite{LN1, LN2}.

\medskip

For viscosity solutions we replace the $\epsilon-$monotonicity 
with slab flatness measuring the thickness of $\fb u\cap B_r(x)$ in terms of the quantity 
$h(x,r)$ introduced in \eqref{min-h}. In other words, $h(x, r)$ measures
how close the free boundary is to a pair of parallel planes in a ball 
$B_r(x)$ with $x\in \Gamma.$ Clearly, planes are 
Lipschitz graphs in the direction of the normal, therefore 
the slab flatness of $\Gamma$ is a particular case of $\e-$monotonicity of $u$.
  
Hence, under $h_0-$flatness of the free boundary we can 
reformulate the regularity theory ``flatness implies $C^{1, \alpha}$'' as follows:
\begin{theorem}\label{TH1}
Let $x_0\in \fb{u}$ and $r>0$ such that~$B_r(x_0)\subset\Omega$. 
Then there exists $h>0$ such that if
$\Gamma\cap B_r(x_0)\subset \{x\in \R^N : -hr<(x-x_0)\cdot\nu<hr\}$ 
then $\Gamma\cap B_{r/2}(x_0)$ is locally~$C^{1,\alpha}$ 
in the direction of $\nu$, for some~$\alpha\in(0,1)$.
\end{theorem}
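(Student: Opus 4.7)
The plan is to derive Theorem \ref{TH1} by reducing it, through the notion of viscosity solution, to the regularity results of Lewis and Nystr\"om in \cite{LN1, LN2}. The starting point is Theorem \ref{TH:viscosity}, which guarantees that any local minimizer $u$ of the functional $J$ in \eqref{Ju} is automatically a viscosity solution in the sense of Definition \ref{def:visc}. Once we know that the viscosity framework is available, everything reduces to supplying the correct geometric hypothesis at $x_0$, namely the closeness of the free boundary to a hyperplane at the prescribed scale $r$.

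The core step of the argument will be to translate the slab-flatness condition
$$\Gamma\cap B_r(x_0)\subset\{x\in\R^N : -hr<(x-x_0)\cdot\nu<hr\}$$
into the $\epsilon$-monotonicity property \eqref{e-mon} in a cone $\Gamma(\theta_0,\nu)$ of directions with $\theta_0$ close to $\pi/2$. Concretely, one chooses $\epsilon$ comparable to $hr$ and shows that for every $\tau$ lying in a cone centered on $\nu$ with aperture $\theta_0$ such that $(\tan(\theta_0/2))^{-1}$ is of order $h$, the inequality $\sup_{B_{\epsilon'\sin\theta_0}(x)}u(y-\epsilon'\tau)\le u(x)$ holds for every $\epsilon'\ge\epsilon$. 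This will use in an essential way the two-sided comparability of $u^+$ and $u^-$ near $\Gamma$ (Corollary \ref{cod-ball}) together with the local log-Lipschitz continuity coming from P.3 of Proposition \ref{prop:tec}, which prevent the two phases from degenerating and allow one to push positivity/negativity of $u$ across the slab by a small rigid shift.

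Once slab flatness has been recast as cone $\epsilon$-monotonicity, the free boundary is realized in $B_{r/2}(x_0)$ as a Lipschitz graph over the hyperplane $\nu^\perp$, and the Lipschitz constant is as small as we wish provided $h$ is chosen small enough. At this point I would invoke directly the regularity theorem in \cite{LN1, LN2}, which upgrades a Lipschitz free boundary for a $p$-harmonic viscosity solution of the two-phase problem to a $C^{1,\alpha}$ graph, for some $\alpha\in(0,1)$ depending on $p$ and on the Lipschitz constant; this is the $p$-Laplacian analogue of Caffarelli's program developed in \cite{Caffa1, Caffa3, Caffa2} for the case $p=2$, and is exactly the input for which the viscosity framework of Definition \ref{def:visc} was set up.

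The step I expect to be most delicate is the first reduction, i.e.\ converting the purely set-theoretic slab condition into a pointwise $\epsilon$-monotonicity of $u$ in a full cone of directions. The difficulty is not geometric but analytic: one must rule out that $u^+$ or $u^-$ collapses near $\Gamma$ in a way that would destroy the directional comparison. This is where the linear growth established for non-flat points in Theorem \ref{TH2-prime}, combined with the coherence estimate \eqref{coherence u}, provides the matching bounds on $u^\pm$ needed to propagate monotonicity from the interior of each phase all the way up to the free boundary via Harnack-type comparison, thus delivering the cone monotonicity that feeds into the Lewis-Nystr\"om machinery.
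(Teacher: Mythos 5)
Your overall route---minimizers are viscosity solutions (Theorem \ref{TH:viscosity}), the slab hypothesis feeds into the Lewis--Nystr\"om theory, and \cite{LN1,LN2} then deliver $C^{1,\alpha}$---is exactly the route the paper takes. In fact the paper offers no detailed proof of Theorem \ref{TH1} at all: it presents the statement as a reformulation of the ``flatness implies $C^{1,\alpha}$'' results of \cite{LN1,LN2} once the viscosity framework of Definition \ref{def:visc} is available, with only a heuristic remark that slab flatness plays the role of $\e$-monotonicity. So at the level of strategy you and the paper agree, and the first and third paragraphs of your proposal are fine.

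The auxiliary mechanism you propose for the ``delicate step,'' however, is internally inconsistent. You want to rule out degeneracy of $u^{\pm}$ near $\Gamma$ by invoking the linear growth of Theorem \ref{TH2-prime}, but that theorem applies precisely at \emph{non-flat} points, i.e.\ where $h(x_0,r)\ge h_0 r$ for all small $r$, whereas Theorem \ref{TH1} lives in the opposite regime, where $\Gamma\cap B_r(x_0)$ is trapped in a slab of height $hr$ with $h$ small. Using the non-flat branch of the paper's dichotomy to prove the flat branch inverts the architecture: the whole point is that flat points are handled by \cite{LN1,LN2} and non-flat points by the discrete monotonicity formula. Similarly, Corollary \ref{cod-ball} gives only the one-sided coherence bound $\left|\fint_{B_r(x_0)}u\right|\le Cr$, which prevents one phase from outgrowing the other but is not a non-degeneracy statement and cannot by itself ``push positivity across the slab.'' If you want to make the reduction from the slab condition to $\e$-monotonicity (and then to a small-Lipschitz graph) rigorous, the required non-degeneracy and Harnack-type propagation must come from the viscosity/minimality structure itself, as in \cite{LN2}---whose flatness hypothesis is essentially the slab condition directly---rather than from Theorem \ref{TH2-prime}.
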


%
%
\section{Geometry of eigenvalues}\label{sec-eigen-1}

Here we present some results that are related to the characteristic numbers 
and the eigenvalues of the $p$-Laplace-Beltrami operator for $p\neq 2$. 

\subsection{Homogeneous $p$-harmonic functions in complementary cones}\label{sec-eigen}

Let us consider 
$$\varphi_p(R,u_1,u_20):=\frac{1}{R^4}\int_{B_R}|\nabla u_1|^p\,\int_{B_R}|\nabla u_2|^p,
$$
for given $u_i=r^{\lambda_i}g_i(\theta)$, with $i=1, 2$ such that $u_1, u_2$ are $p-$harmonic 
in two complementary cones. Here $r, \theta$ are the polar coordinates.
We will show an estimate on the eigenvalues $\lambda_1$ and $\lambda_2$ of the $p$-Laplace-Beltrami 
operator, namely we prove that
\begin{equation}\begin{split}\label{label71}
&\sqrt{\lambda_1(\lambda_1(p-1)+2-p)}+\sqrt{\lambda_2(\lambda_2(p-1)+2-p)}\geq 2,\\
&{\mbox{with equality if and only if both functions are linear.}}\end{split}\end{equation}
In turn, this implies that $\phi_p(R, u_1, u_2 0)$ 
is non-decreasing in $R$. Furthermore, $\phi_p(R, u_1, u_2 0)$
is constant if and only if $\lambda_1=\lambda_2=0$.

\subsection{Properties of Eigenvalues}
In this section we prove a relation between the eigenvalues of the $p-$Laplace-Beltrami operator
that correspond to two complementary cones. We begin with an existence 
result of P. Tolksdorf \cite{T1}, page 780, Theorem 2.1.1 and Corollary 2.1.

\begin{theorem}\label{Tolksdorf-eigenvalue}
 Let $S:=(0,\omega)$, with $\omega\in [0, 2\pi]$. 
 Then there exists a solution $(\lambda, \phi(\theta))$, with $\theta \in S$ of
\begin{equation}\label{starewrwe}
\left\{
\begin{array}{ccc}
 -\frac{d}{d\theta} \left\{(\lambda^2\phi^2+\phi_\theta^2)^{\frac{p-2}{2}} \phi_\theta \right\} =\lambda(\lambda(p-1)+2-p)(\lambda^2\phi^2+\phi_\theta^2)^{\frac{p-2}{2}} \phi  \  {\rm in}\  S,\\
\phi(\theta)=0\  {\rm on}\  \partial S,
\end{array}
\right.
\end{equation}
such that
\begin{eqnarray*}
&&\lambda>\max\left\{0, \frac{p-2}{p-1}\right\},\  \phi>0 \ {\rm in } \ S,\\\nonumber
{\mbox{and }} \; &&\phi^2+\phi_\theta^2>0 \ {\rm in } \ S.
\end{eqnarray*}
Furthermore any two solutions are constant multiples of each other.
\end{theorem}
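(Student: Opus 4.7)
The plan is to obtain $(\lambda,\phi)$ by a shooting argument in $\lambda$, linearised by a Pr\"{u}fer-type change of variables that exploits the fact that \eqref{starewrwe} is positively $(p-1)$--homogeneous in $\phi$ and arises from the $p$--harmonicity of $u(r,\theta)=r^\lambda\phi(\theta)$ on the sector $\{r>0,\ \theta\in S\}$.

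For each admissible $\lambda>\max\{0,(p-2)/(p-1)\}$ I would first solve the IVP for \eqref{starewrwe} with $\phi(0)=0$, $\phi_\theta(0)=1$: at $\theta=0$ the weight $(\lambda^2\phi^2+\phi_\theta^2)^{(p-2)/2}$ equals $1$, so classical Cauchy--Lipschitz applies and gives a unique local $C^1$ solution $\phi_\lambda$. To prolong it I introduce $\phi=\rho\sin\psi$, $\phi_\theta=\rho\lambda\cos\psi$, which normalises the quadratic form to $\lambda^2\phi^2+\phi_\theta^2=\lambda^2\rho^2$ and makes $A=(\lambda\rho)^{p-2}$. Substituting into \eqref{starewrwe} and eliminating $\rho_\theta$ via the compatibility relation $\rho_\theta\sin\psi=\rho(\lambda-\psi_\theta)\cos\psi$ produces the decoupled angular equation
\[
\psi_\theta=\frac{(p-1)\lambda+(2-p)\sin^2\psi}{\sin^2\psi+(p-1)\cos^2\psi}.
\]
The denominator $1+(p-2)\cos^2\psi$ is strictly positive for $p>1$, and the admissibility condition on $\lambda$ is precisely what forces the numerator to be strictly positive; so $\psi_\theta>0$, $\psi$ is strictly increasing, and $\rho$ stays strictly positive (ruling out the degenerate alternative $\phi^2+\phi_\theta^2\to 0$). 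The first zero of $\phi_\lambda$ therefore occurs exactly at $\psi=\pi$, and
\[
\omega(\lambda)=\int_0^\pi\frac{\sin^2\psi+(p-1)\cos^2\psi}{(p-1)\lambda+(2-p)\sin^2\psi}\,d\psi.
\]

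Next I would invert this expression. Differentiation under the integral shows $\omega(\lambda)$ is strictly decreasing, and an asymptotic analysis gives $\omega(\lambda)\to 0$ as $\lambda\to\infty$ while $\omega(\lambda)\to\infty$ as $\lambda$ approaches the left endpoint of admissibility -- for $p>2$ the divergence arises from a non-integrable singularity at $\psi=\pi/2$ of the form $1/((p-1)\delta+(p-2)\eta^2)$ with $\delta=\lambda-(p-2)/(p-1)$ and $\eta=\pi/2-\psi$, while for $p\le 2$ it comes from the boundary singularities at $\psi=0,\pi$. The intermediate value theorem then supplies, for every $\omega\in(0,2\pi]$, a unique $\lambda_*$ with $\omega(\lambda_*)=\omega$; the corresponding $\phi_{\lambda_*}$ is positive on $S$ and satisfies $\phi^2+\phi_\theta^2>0$ throughout $\overline S$ by construction. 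Uniqueness up to constant multiples follows from $(p-1)$--homogeneity: any positive solution $\tilde\phi$ with $\tilde\phi(0)=\tilde\phi(\omega)=0$ has $c:=\tilde\phi_\theta(0)>0$ by the non-degeneracy just proved, and $c^{-1}\tilde\phi$ then solves the same IVP as $\phi_{\lambda_*}$, hence coincides with it.

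The main obstacle is the Pr\"{u}fer reduction itself -- in particular verifying that the apparent singularity of the auxiliary equation $\rho_\theta=\rho(\lambda-\psi_\theta)\cot\psi$ at $\psi=0,\pi$ is removable. A Taylor expansion of the explicit formula for $\psi_\theta$ around those points gives $\lambda-\psi_\theta=O(\sin^2\psi)$, so the $\cot\psi$ singularity cancels and $\rho$ extends smoothly across $\psi=0,\pi$. Once this cancellation is in hand, together with the care needed to pass the estimate on $\omega(\lambda)$ through the endpoints of admissibility, the remaining steps are routine ODE/IVT reasoning.
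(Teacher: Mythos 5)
Your proposal is correct in its essentials, but be aware that the paper itself contains no proof of Theorem \ref{Tolksdorf-eigenvalue}: the statement is quoted from Tolksdorf \cite{T1} (Theorem 2.1.1 and Corollary 2.1), where it is proved in arbitrary dimension for cones with general cross-section by methods that are not ODE-based. Your Pr\"ufer reduction is therefore a genuinely different, self-contained route that is special to the planar case. The central computation checks out: with $\phi=\rho\sin\psi$, $\phi_\theta=\lambda\rho\cos\psi$, equation \eqref{starewrwe} and the compatibility relation $\rho_\theta\sin\psi=\rho(\lambda-\psi_\theta)\cos\psi$ yield
\begin{equation*}
\psi_\theta=\frac{(p-1)\lambda+(2-p)\sin^2\psi}{1+(p-2)\cos^2\psi},
\qquad
\frac{\rho_\theta}{\rho}=\frac{(p-2)(1-\lambda)\sin\psi\cos\psi}{1+(p-2)\cos^2\psi},
\end{equation*}
so indeed $\lambda-\psi_\theta=O(\sin^2\psi)$ and the apparent $\cot\psi$ singularity is removable, the numerator of $\psi_\theta$ is positive exactly when $\lambda>\max\{0,(p-2)/(p-1)\}$, and the period $\omega(\lambda)$ is continuous, strictly decreasing, tends to $0$ as $\lambda\to+\infty$ and to $+\infty$ at the left endpoint of admissibility (for $p>2$ via the $\int d\eta/((p-1)\delta+(p-2)\eta^2)\sim C\delta^{-1/2}$ divergence you identify). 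What your approach buys: it is elementary, it gives uniqueness of the exponent for free from the strict monotonicity of $\omega(\cdot)$, and $\omega(\lambda)$ can even be integrated in closed form and inverted, recovering Dobrowolski's formula \eqref{lambda} that the paper quotes separately from \cite{Dob}; what it loses relative to \cite{T1} is any applicability to $N\ge 3$. Two points to make explicit when writing it up: justify local existence and uniqueness at $\theta=0$ by putting the equation in the normal form $\bigl(\lambda^2\phi^2+(p-1)\phi_\theta^2\bigr)\phi_{\theta\theta}=-(p-2)\lambda^2\phi\,\phi_\theta^2-\lambda(\lambda(p-1)+2-p)(\lambda^2\phi^2+\phi_\theta^2)\phi$, whose leading coefficient is positive away from the origin of the phase plane (the remark that the weight equals $1$ at $\theta=0$ is not by itself sufficient for Cauchy--Lipschitz); and, for uniqueness of $\lambda$, record explicitly that every solution in the sense of the theorem, after normalizing so that $\phi_\theta(0)=1$, coincides with the shooting solution, hence its exponent must satisfy $\omega(\lambda)=\omega$.
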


M. Dobrowolski computed explicitly the value of $\lambda$ in \eqref{starewrwe}, 
see \cite{Dob} page 187, Theorem 1:

\begin{theorem}
Let $\phi$ be given by Theorem \ref{Tolksdorf-eigenvalue}. Then
\begin{eqnarray}\label{lambda}
 \lambda=\left\{
\begin{array}{ccc}
 s+\sqrt{s^2+\frac 1\rho}, &\omega \le \pi, \\
s-\sqrt{s^2+\frac 1\rho}, &\pi\le \omega< 2\pi,\\
\frac{p-1}{p}, &\omega=2\pi,
\end{array}
\right.
\end{eqnarray}
where
\begin{eqnarray}
\rho &:=&\left(\frac\omega\pi-1\right)^2-1 \label{rho},\\
{\mbox{and }}\; s&:=&\frac{(\rho-1)p-2\rho}{2\rho(p-1)}=\frac{p-2}{2(p-1)}+\frac{p}{2(p-1)}\left[-\frac 1\rho\right]\label{s}.
\end{eqnarray}
\end{theorem}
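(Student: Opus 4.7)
The plan is to reduce the second-order nonlinear equation \eqref{starewrwe} to a separable first-order equation via an amplitude-phase substitution, integrate it explicitly, and solve the resulting algebraic relation for $\lambda$. Throughout, write $F:=(\lambda^2\phi^2+\phi_\theta^2)^{(p-2)/2}$ and $\mu:=\lambda(\lambda(p-1)+2-p)$, so that the ODE reads $(F\phi_\theta)'=-\mu F\phi$. Motivated by the case $p=2$ (where $\phi=\sin(\lambda\theta)$), I introduce
$$
t(\theta):=\frac{\phi_\theta(\theta)}{\lambda\,\phi(\theta)}.
$$
The hypotheses on $\phi$ from Theorem~\ref{Tolksdorf-eigenvalue} force $t(0^+)=+\infty$, $t(\omega^-)=-\infty$, and $t$ monotone decreasing on $(0,\omega)$.

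Substituting into the ODE, one finds $\phi_{\theta\theta}/\phi=\lambda\,t_\theta+\lambda^2 t^2$, and a short computation of $F'/F$ (using the identity $\lambda^2-\mu=-(p-2)\lambda(\lambda-1)$ to eliminate all $\phi$-dependence) leads, after a clean cancellation hinging on $\lambda^2(p-1)-\mu=\lambda(p-2)$, to the separable equation
$$
\lambda\bigl[1+(p-1)t^2\bigr]\,t_\theta=-(1+t^2)\bigl[\mu+\lambda^2(p-1)t^2\bigr].
$$
Separating and integrating over $\theta\in(0,\omega)$, the length $\omega$ is expressed as an integral over $t\in\mathbb{R}$. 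The partial-fraction decomposition
$$
\frac{\lambda\bigl[1+(p-1)t^2\bigr]}{(1+t^2)\bigl[\mu+\lambda^2(p-1)t^2\bigr]}=\frac{1}{1+t^2}+\frac{\lambda(p-1)(1-\lambda)}{\mu+\lambda^2(p-1)t^2}
$$
reduces the evaluation to two elementary arctangent integrals and produces
$$
\omega-\pi=\pi(1-\lambda)\sqrt{(p-1)/\mu}.
$$

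To conclude, I square this and clear denominators, obtaining the quadratic
$$
\rho(p-1)\lambda^2+\bigl[p-\rho(p-2)\bigr]\lambda-(p-1)=0,
$$
where $\rho$ is as in \eqref{rho}. The algebraic identity $-[p-\rho(p-2)]=2s\rho(p-1)$, with $s$ as in \eqref{s}, makes the quadratic formula produce $\lambda=s\pm\sqrt{s^2+1/\rho}$. The correct sign is pinned down by the unsquared identity displayed above, which shows $\mathrm{sign}(\omega-\pi)=\mathrm{sign}(1-\lambda)$; this selects the ``$+$'' branch for $\omega\le\pi$ and the ``$-$'' branch for $\pi\le\omega<2\pi$. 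The degenerate value $\omega=2\pi$, where $\rho=0$, collapses the quadratic to the linear equation $p\lambda=p-1$, giving $\lambda=(p-1)/p$ and completing the three cases of \eqref{lambda}. I expect the main obstacle to be the algebraic simplification producing the separable ODE for $t$: the right-hand side factors as the product $(1+t^2)\bigl[\mu+\lambda^2(p-1)t^2\bigr]$ only by virtue of the precise form of $\mu$ as a function of $\lambda$ and $p$, and the intermediate expressions involving $F'/F$ must be organized carefully to see the cancellation.
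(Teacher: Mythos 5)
Your derivation is correct, and it is worth noting that the paper itself offers no proof of this statement at all: it is quoted verbatim from Dobrowolski \cite{Dob} (page 187, Theorem 1), so you are supplying an argument where the paper only supplies a citation. Your route --- the Pr\"ufer-type substitution $t=\phi_\theta/(\lambda\phi)$, reduction to the separable first-order equation $\lambda[1+(p-1)t^2]\,t_\theta=-(1+t^2)[\mu+\lambda^2(p-1)t^2]$ with $\mu=\lambda(\lambda(p-1)+2-p)$, and the computation of $\omega=\int_{-\infty}^{+\infty}dt/|t_\theta|$ by partial fractions --- is essentially the classical phase-plane method for quasi-radial $p$-harmonic functions that underlies the cited source, and every step checks out: the factorization of the quartic in $t$, the coefficients $A=1$, $B=\lambda(p-1)(1-\lambda)$ (which uses $\lambda^2(p-1)-\mu=\lambda(p-2)$), the identity $\omega-\pi=\pi(1-\lambda)\sqrt{(p-1)/\mu}$, the resulting quadratic $\rho(p-1)\lambda^2+[p-\rho(p-2)]\lambda-(p-1)=0$, and the degenerate case $\rho=0$ at $\omega=2\pi$. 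Two small remarks. First, your aside that the factorization of the right-hand side holds ``only by virtue of the precise form of $\mu$'' is not accurate: for any constant $\mu$ one has
\begin{equation*}
\mu+[\mu+(p-1)\lambda^2]t^2+\lambda^2(p-1)t^4=(1+t^2)\bigl[\mu+\lambda^2(p-1)t^2\bigr],
\end{equation*}
so no cancellation is at stake there; the structure of $\mu$ enters only in the partial fractions. Second, the sign selection deserves one more line: the unsquared identity gives $\mathrm{sign}(\omega-\pi)=\mathrm{sign}(1-\lambda)$, but to conclude that the ``$+$'' branch is the one with $\lambda\ge1$ you should observe that the two roots $\lambda_\pm=s\pm\sqrt{s^2+1/\rho}$ straddle $1$, since $(\lambda_+-1)(\lambda_--1)=-\tfrac1\rho-2s+1=\frac{1+1/\rho}{p-1}\le0$ because $-1\le\rho<0$. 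With that sentence added, the argument is complete and self-contained.
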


The key result of this section is contained in the following lemma.

\begin{lemma}\label{lemma123}
Let $\lambda_1$ be the solution of \eqref{starewrwe}  
for $S_1:=(0, \omega)$ and $\lambda_2$ for the complementary arc $S_2:=(\omega, 2\pi)$. Then
\begin{equation}\label{sum-geq-2}
 \sqrt{\lambda_1(\lambda_1(p-1)+2-p)}+\sqrt{\lambda_2(\lambda_2(p-1)+2-p)}\geq 2.
\end{equation}
Furthermore equality holds if and only if $\lambda_1=\lambda_2=1$, i.e. for half circles $S=(0, \pi)$.
\end{lemma}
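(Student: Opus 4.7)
The plan is to reduce \eqref{sum-geq-2} to a single application of the AM--GM inequality by extracting two clean algebraic identities from the explicit formula \eqref{lambda}. Write $\omega_1 := \omega$ and $\omega_2 := 2\pi - \omega$, and suppose without loss of generality that $\omega \le \pi$. Since $(\omega_i/\pi - 1)^2$ is invariant under the swap $\omega_1\leftrightarrow\omega_2$, the formula \eqref{rho} yields $\rho_1 = \rho_2 =: \rho$, and consequently $s_1 = s_2 =: s$ by \eqref{s}. The two sign choices in \eqref{lambda} therefore exhibit $\lambda_1$ and $\lambda_2$ as the two roots of $x^2 - 2sx - 1/\rho = 0$, whence
\begin{equation*}
\lambda_1 + \lambda_2 = 2s \quad\text{and}\quad \lambda_1\lambda_2 = -\tfrac{1}{\rho} =: q,
\end{equation*}
with $q\ge 1$ and $q=1$ if and only if $\omega = \pi$.

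Next, set $\kappa := (p-2)/(p-1)$, so that $\mu_i = (p-1)\lambda_i(\lambda_i-\kappa)$. Rearranging \eqref{s} gives the clean form $2s = 2 + \tfrac{p}{p-1}(q-1)$, and substituting this into $(\lambda_1-\kappa)(\lambda_2-\kappa) = q - 2s\kappa + \kappa^2$ collapses, thanks to the elementary identity $(p-1)^2 - p(p-2) = 1$, to the key cancellation
\begin{equation*}
(\lambda_1-\kappa)(\lambda_2-\kappa) = \frac{q}{(p-1)^2}.
\end{equation*}
Combined with $\lambda_1\lambda_2 = q$, this yields the striking simplification $\mu_1\mu_2 = (p-1)^2\,\lambda_1\lambda_2\,(\lambda_1-\kappa)(\lambda_2-\kappa) = q^2$. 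A preliminary check that both $\lambda_i$ exceed $\kappa$ (so that $\mu_i > 0$) follows from $\lambda_1\ge s\ge 1 > \kappa$ combined with the positivity of the product just computed.

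AM--GM now completes the argument:
\begin{equation*}
\sqrt{\mu_1} + \sqrt{\mu_2} \ge 2(\mu_1\mu_2)^{1/4} = 2\sqrt{q} \ge 2,
\end{equation*}
with equality iff $\mu_1 = \mu_2$ and $q = 1$, i.e., iff $\lambda_1 = \lambda_2 = 1$, corresponding exactly to the half-circle case $\omega = \pi$. The only genuine obstacle is spotting the cancellation that leads to $(\lambda_1-\kappa)(\lambda_2-\kappa) = q/(p-1)^2$; once this identity has been observed, the remainder of the proof is essentially an arithmetic verification.
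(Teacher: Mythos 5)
Your proof is correct, and it shares the paper's setup --- the observation that $\rho_1=\rho_2$ and $s_1=s_2$, Vieta's relations $\lambda_1+\lambda_2=2s$, $\lambda_1\lambda_2=-1/\rho=q\ge1$, and the key cancellation $\mu_1\mu_2=q^2$ (which is exactly the paper's computation $II=t^2$, with $t=q$) --- but your endgame is genuinely different and shorter. The paper squares the left-hand side of \eqref{sum-geq-2} and must therefore also compute the sum $I=\mu_1+\mu_2=2t[sp-(p-1)]$, after which a further page of algebra reduces $I+2\sqrt{II}$ to $4t+\frac{p^2}{p-1}t(t-1)\ge4$. You bypass the computation of $I$ entirely by applying AM--GM to $\sqrt{\mu_1},\sqrt{\mu_2}$, so that only the product is needed: $\sqrt{\mu_1}+\sqrt{\mu_2}\ge2(\mu_1\mu_2)^{1/4}=2\sqrt{q}\ge2$. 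This buys a substantially leaner argument and a cleaner equality analysis (equality forces $q=1$, hence $\omega=\pi$ and $\lambda_1=\lambda_2=1$); what it gives up is the explicit quantitative lower bound $4t+\frac{p^2}{p-1}t(t-1)$ on $\bigl(\sqrt{\mu_1}+\sqrt{\mu_2}\bigr)^2$, which measures the deficit in terms of $t-1$ and could be useful for stability statements, though the paper does not in fact exploit it. Two small presentational points: you use $\mu_i$ before defining it as $\lambda_i(\lambda_i(p-1)+2-p)$, and the positivity $\mu_i>0$ that legitimizes the square roots and AM--GM follows even more directly from Theorem \ref{Tolksdorf-eigenvalue}, which already guarantees $\lambda_i>\max\{0,\frac{p-2}{p-1}\}=\max\{0,\kappa\}$; your argument via $\lambda_1\ge s\ge1>\kappa$ and the sign of the product is also fine.
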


\begin{proof}
Without loss of generality we may assume that $\omega\le \pi$. Next let us notice that the eigenvalue $\lambda$ is determined by the
size of the arc only. Hence for $S_2$ we have  by (\ref{rho})	
\begin{equation*}
 \rho_2=\left(\frac{2\pi -\omega}{\pi}-1\right)^2-1=\left(1-\frac\omega\pi\right)^2-1.
\end{equation*}
Thus $\rho_1=\rho_2=\rho$ and from (\ref{s}) we infer that $s_1=s_2=s$.
In order to prove (\ref{sum-geq-2}) it is enough to check that
\begin{equation}\label{aggdopo}\begin{split}
&I+2\sqrt{II}\ge 4,\\
{\mbox{where }} \; &I:=\lambda_1(\lambda_1(p-1)+2-p)+\lambda_2(\lambda_2(p-1)+2-p) \\
{\mbox{and }}\; &II:=\lambda_1\lambda_2(\lambda_1(p-1)+2-p)(\lambda_2(p-1)+2-p).
\end{split}\end{equation}
In order to prove this, we notice that, by \eqref{lambda}, 
\begin{eqnarray*}
\left\{
\begin{array}{ccc}
 \lambda_1+\lambda_2&=&2s\\
\lambda_1\lambda_2&=&-\frac1\rho\\
\lambda_1^2+\lambda_2^2&=&4s^2+\frac 2\rho
\end{array}
\right.
\end{eqnarray*}
which gives
\begin{eqnarray}\label{I1}
 I=(\lambda_1^2+\lambda_2^2)(p-1)+(2-p)(\lambda_1+\lambda_2)= (p-1)\left(4s^2+\frac2\rho\right)+2s(2-p).
\end{eqnarray}

For convenience we introduce a new quantity
\begin{equation}\label{t-def}
t:=-\frac1\rho
\end{equation} 
and notice that, by \eqref{rho}, we have that $t\geq 1$ and, by \eqref{s}, one has
\begin{equation}\label{st}
 s=\frac{1}{2(p-1)}(p-2+pt).
\end{equation}
Hence \eqref{I1} can be manipulated further in the following way:
\begin{eqnarray*}
I&=&2\left[(p-1)\left(2s^2+\frac1\rho\right)+s(2-p)\right]\\
&=&2\left[s\left\{2s(p-1)+(2-p)\right\}+ (p-1)\frac1\rho\right]\\
&=&2\left[s\left\{(p-2+pt)+(2-p)\right\}+ (p-1)\frac1\rho\right]\\
&=&2 \left[spt-t(p-1)\right]\\
&=&2t[sp-(p-1)].\\
\end{eqnarray*}

Similarly, using \eqref{lambda} and \eqref{st} we get
\begin{eqnarray*}
 II&=&\lambda_1\lambda_2(\lambda_1(p-1)+2-p)(\lambda_2(p-1)+2-p)\\
&=&-\frac1\rho\left[\lambda_1 \lambda_2 (p-1)^2+(p-1)(2-p)(\lambda_1+\lambda_2)+(2-p)^2\right]\\
&=&-\frac1\rho\left[-\frac1\rho (p-1)^2+2s(p-1)(2-p)+(2-p)^2\right]\\
&=&t\left[t(p-1)^2+2s(p-1)(2-p)+(p-2)^2\right]\\
&=&t\left[t(p-1)^2+(p-2+pt)(2-p)+(2-p)^2\right]\\
&=&t\left[t(p-1)^2+pt(2-p)\right]\\
&=&t^2.
\end{eqnarray*}
Thus, putting together the last two formulas,
\begin{equation}\begin{split}\label{sharq-0}
I+2\sqrt{II}=\,& 2t[sp-(p-1)]+2t\\
=\,&2t\left[sp-(p-1)+1\right]\\
=\,&2t\left[\frac{p}{2(p-1)}(p-2+pt)-(p-1)+1\right]\\
=\,&\frac{t}{p-1}\left[p(p-2+pt)-2(p-1)(p-2)\right]\\
=\,&\frac{t}{p-1}\left[p^2-2p+p^2t-2p^2+6p-4)\right]\\
=\,&\frac{t}{p-1}\left[p^2(t-1)+4(p-1)\right]\\
=\,&4t+\frac{p^2}{p-1}t(t-1)\\
\geq\,& 4
\end{split}\end{equation}
since $t\geq 1$, which implies \eqref{aggdopo} and finishes the proof of Lemma \ref{aggdopo}.
\end{proof}


\subsection{Computing the logarithmic derivative}
In what follows, fixed $u_1$ and $u_2$, 
we put $\phi(R):=\phi_p(R, u_1,u_2,0)$. 
In order to prove that $\phi$ is non-decreasing in $R$, 
it is enough to prove that $\phi'(R)\ge 0$ for $R=1$, 
since $\phi$ is scale-invariant. 
For this, let $J_i:=\int_{B_R}|\na u_i|^p$. Then we have 
\begin{eqnarray}
\displaystyle (\log \phi(R))'=\frac{\phi'(R)}{\phi(R)}&=&
\frac{J'_1(R)}{J_1(R)}+\frac{J_2'(R)}{J_2(R)}-\frac4R\\\nonumber
&=&\displaystyle \frac{\int_{\partial B_R}|\na u_1|^p}{\int_{B_R}|\na u_1|^p}+\frac{\int_{\partial B_R}|\na u_2|^p}{\int_{B_R}|\na u_2|^p}-\frac4R\\\nonumber
&=&\displaystyle \frac1R\left(\frac{R\int_{\partial B_R}|\na u_1|^p}{\int_{B_R}|\na u_1|^p}+\frac{R\int_{\partial B_R}|\na u_2|^p}{\int_{B_R}|\na u_2|^p}-4\right).\\\nonumber
\end{eqnarray}

Next we notice that
\begin{eqnarray}\label{sharq-1}
 \int_{B_1}|\na u_i|^p&=&\int_{\partial B_1}|\na u_i|^{p-2}u_i\frac{\partial u_i}{\partial \nu}\\\nonumber
&\leq&\left[\int_{\partial B_1}|\na u_i|^{p-2}u^2_i\int_{\partial B_1}|\na u_i|^{p-2}
u_{i,{\rm rad}}^2\right]^{\frac12}, \quad i=1,2,
\end{eqnarray}
where $u_{i,\nu}=u_{i,{\rm rad}}$ is the radial derivative (in direction of the outer unit normal $\nu$ of unit circle).

Next decomposing $|\na u_i|^2$ into the sum of the squares of the radial and tangential derivative, $u_{i,\theta}$, we obtain
\begin{eqnarray}\label{sharq-2}
 \int_{\partial B_1}|\na u_i|^p&=&\int_{\partial B_1}|\nabla u_i|^{p-2}(u_{i,{\rm rad}}^2+u_{i, \theta}^2)\geq\\\nonumber
&\geq& 2\left[\int_{\partial B_1}|\na u_i|^{p-2}u_{i,{\rm rad}}^2\int_{\partial B_1}|\nabla u_i|^{p-2}u_{i,\theta}^2\right]^{\frac12}.
\end{eqnarray}

Hence to prove that $\phi$ is monotone it is enough to check that
\begin{eqnarray*}
 \underbrace{\left[\frac{\displaystyle \int_{S_1}|\nabla u_1|^{p-2}u_{1,\theta}^2}{\displaystyle\int_{ S_1}|\na u_1|^{p-2}u_1^2}\right]^{\frac12}}_{H(u_1)} +
\underbrace{\left[\frac{\displaystyle\int_{S_2}|\nabla u_2|^{p-2}u_{2,\theta}^2}{\displaystyle\int_{ S_2}|\na u_2|^{p-2}u_2^2}\right]^{\frac12}}_{H(u_2)}-2\geq 0.
\end{eqnarray*}
Here $S_i:=\supp u_i\cap\partial B_1$. 
For the solutions $(\lambda_i, \phi_i)$
of the eigenvalue problem on $S_i$ stated in Theorem \ref{Tolksdorf-eigenvalue}
and \eqref{sum-geq-2} we infer that $H(u_1)+H(u_2)\ge 2$, thanks to \eqref{sum-geq-2}. 

Recalling the notation in \eqref{aggdopo} and 
observing that in \eqref{sharq-0} the equality $I+2\sqrt{II}=4$ holds if and only if 
$(t-1)(4+\frac{tp^2}{p-1})=0$, we conclude that $t=1$.
On the other hand, the equality holds in \eqref{sum-geq-2} if and only if $t=1$, i.e. 
by \eqref{t-def} when $\rho=-1$, and hence, 
in view of \eqref{rho}, when $\omega=\pi$, which corresponds to the half circle. 
This implies that $\phi$ is non-decreasing, 
and in turn shows \eqref{label71}.

\end{document}